\documentclass{patmorin}
\usepackage{pat}
\usepackage[T1]{fontenc}
\usepackage[utf8]{inputenc}
\usepackage{mathtools}
\usepackage{amssymb}
\usepackage{thmtools}
\usepackage{thm-restate}
\newcommand{\sm}{\smallsetminus}

\usepackage[inline]{enumitem}
\usepackage{soul}
\usepackage[normalem]{ulem}

\crefname{p}{}{}
\creflabelformat{p}{#2(#1)#3}

\usepackage{todonotes}

\usepackage[longnamesfirst,numbers,sort&compress]{natbib}

\newcommand{\Aut}{\mathsf{Aut}}
\newcommand{\Autpm}{\mathsf{Aut}^{\pm}}
\newcommand{\Sym}{\mathsf{Sym}}
\newcommand{\Dih}{\mathsf{Dih}}
\newcommand{\SD}{\mathsf{SD}}

\title{\MakeUppercase{Automorphism Groups in Extremal Families of Polyhedral Graphs}%
  \thanks{This research was partly funded by NSERC.\\This research was partly funded by Programme for Young Researchers `Rita Levi Montalcini' PGR21DPCWZ.}}

\author{
Riccardo W. Maffucci
\thanks{Dipartimento di Matematica, Universit\`a di Torino
Via Carlo Alberto 10, Turin 10123, Italy}
\and\qquad
Bobby Miraftab%
\thanks{School of Computer Science, Carleton University, Ottawa, Canada.}
}

\date{}

\begin{document}
\maketitle

\begin{abstract}
We study automorphism groups in five extremal families of polyhedral graphs.
For every $n\ge14$, we prove that every minimum-order $3$-polytopal graph
containing a vertex of each degree $3,4,\ldots,n$ is asymmetric.  The proof
uses an exact planar defect decomposition, a complete description of the
high-degree tail, and a saturation theorem for the subgraph induced by the
uniquely high-degree vertices.  Duality gives the corresponding asymmetry
result for minimum-face polyhedra containing faces of every size
$3,4,\ldots,n$.  For the three polyhedral graphs whose complements are also
polyhedral, we determine the ordinary and extended automorphism groups and
identify the extended group
\[
\Autpm(G_{13})\cong (C_2\times C_2)\rtimes C_4.
\]
Next, we classify automorphism groups of radius-one polyhedra. In the
unique-dominating-vertex case they are cyclic or dihedral, and in the
triangulated case the possibilities are
\[
1,\qquad C_2,\qquad C_3,\qquad C_2\times C_2,\qquad S_3.
\]
For polyhedra that are unigraphic among the class of self-dual, we show that their automorphism group is either $1$ or $C_2$.
\\
Finally, we consider polyhedra that are products of graphs, for each of the four standard graph products, and we classify them according to their automorphism group.
\end{abstract}
\noindent
{\bf Keywords:} Automorphism group, Planar graph, Polyhedron, Degree sequence, Dominating vertex, Self-dual, Graph product.
\\
{\bf MSC(2020):} 05C25, 05C10, 	05C35, 05C76, 52B05, 52B10.

\section{Introduction}

A polyhedral graph is a finite simple $3$-connected planar graph. These are the wireframes of polyhedral solids \cite{radste}, have a unique immersion in the sphere \cite{whit32}, and their duals are also polyhedra. For an account of further properties of this class of graphs, we refer the reader to \cite[Section 1.1]{maffucci2025regularity}.

Throughout, $C_m$ denotes the cyclic group of order $m$, and
$\Dih_m$ the dihedral group of order $2m$. Apart from the Platonic solids, the automorphism group of a polyhedron is one of \cite{whit32,klavik2015constructive}
\[C_m,\quad\Dih_m,\quad C_m\times C_2,\quad\Dih_m\times C_2.\]
The automorphism group of the tetrahedron is the symmetric group $S_4$, that of the cube and octahedron is $S_4\times C_2$, that of the dodecahedron and icosahedron is $A_5\times C_2$, where $A_5$ is the alternating group \cite{whit32,klavik2015constructive}.

We consider several extremal settings.  
First, we study minimum-order
polyhedral graphs containing a vertex of every degree $3,4,\ldots,n$.  An
exact decomposition of the planar degree-sum defect determines the entire
high-degree tail and forces a dense subgraph on the uniquely high-degree
vertices.  
This yields asymmetry without any additional facial hypothesis.
Second, we determine the ordinary and extended automorphism groups of the
three polyhedral graphs whose complements are polyhedral. 
Third, we classify the automorphism groups of polyhedra having a dominating vertex, with a sharper classification in the triangulated case. Fourth, we classify the automorphism groups of graphs that are unigraphic among the self-dual polyhedra, meaning that they are the only realisation of their degree sequence among the self-dual polyhedra. Fifth, we classify the polyhedral graph products (Cartesian, Kronecker, strong, lexicographic) in terms of automorphism groups.

\section{Preliminaries}

The following result can be found in
\cite{Bowen1966ValenciesPlanarGraphs,Chvatal1969PlanarityDegreeVertices}.

\begin{lem}\label{lem:BC}
Let $G$ be a planar graph of order $p$, and write its degree sequence in weakly
decreasing order as $d_1\ge d_2\ge\cdots\ge d_p$.  If
\[
3\le k\le \frac{p+4}{3},
\]
then
\[
\sum_{i=1}^{k}d_i\le 2p+6k-16.
\]
\end{lem}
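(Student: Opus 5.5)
The plan is to count edges, split according to whether they meet the set $A$ of the $k$ vertices of largest degree. Let $B=V(G)\sm A$, so $|B|=p-k$. Write $e(A)$ for the number of edges inside $A$, $e(A,B)$ for the number between $A$ and $B$, and $S=\sum_{i\le k}d_i$. Then
\[
S=2e(A)+e(A,B).
\]
We may add edges while keeping $G$ planar, since this only increases $S$. We may also assume $p\ge k+3$, which follows from $k\le (p+4)/3$ and $k\ge3$ except in tiny cases, where the inequality can be checked directly.

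The first bound is $e(A)\le 3k-6$, because $G[A]$ is a planar graph on $k\ge3$ vertices. The second bound concerns $e(A,B)$. The bipartite graph between $A$ and $B$ is planar, and bipartite planar graphs on $p$ vertices have at most $2p-4$ edges. This gives only $S\le 6k-12+2p-4=2p+6k-16$, and that is exactly the claimed bound. So the plan reduces to justifying the two bounds simultaneously.

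No joint constraint is actually needed: each bound holds separately, and the edge sets $E(A)$ and $E(A,B)$ are disjoint. Thus $S=2e(A)+e(A,B)\le 2(3k-6)+(2p-4)$. The bipartite bound follows from Euler's formula. Every face of a bipartite planar graph with at least one cycle has length at least $4$, so $e\le 2p-4$. If the graph is a forest, then $e\le p-1\le 2p-4$ because $p\ge3$.

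The only things to double-check are small or degenerate cases. If $k$ is so large relative to $p$ that $B$ is empty, the count is different, but the hypothesis $k\le(p+4)/3$ prevents this: it forces $p-k\ge 2k-4\ge 2$. The hypothesis $k\le (p+4)/3$ otherwise seems unnecessary, presumably because the lemma was stated as the tight range from the cited sources. I expect no real obstacle. The main point to state carefully is that the bipartite subgraph between $A$ and $B$, taken on all $p$ vertices, is planar and bipartite, which justifies the bound $2p-4$.
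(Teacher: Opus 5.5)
Your proof is correct and is essentially the paper's argument: the paper obtains this inequality from its planar defect decomposition (Lemma~\ref{lem:planar-defect}), i.e.\ $\sum_{x\in X}d_G(x)=2e(X)+e(X,\overline X)$ with $e(X)\le 3|X|-6$ and $e(X,\overline X)\le 2p-4$ for the spanning bipartite graph $B_X$. The paper likewise remarks that the upper restriction on $k$ is unnecessary; your preliminary reductions (adding edges, assuming $p\ge k+3$) are never used and can simply be dropped.
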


\begin{lem}[{\cite[Theorem 2]{Maffucci2023PolytopalFamilies}}]
\label{lem:JGT-pn}
Let $n\ge14$.  If $G$ is a $3$-polytopal graph of minimum order among
$3$-polytopal graphs containing at least one vertex of degree $i$ for every
$3\le i\le n$, then
\[
|V(G)|=
 p(n):=
\left\lceil \frac{n^2-11n+62}{4}\right\rceil.
\]
\end{lem}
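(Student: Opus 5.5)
The statement has two halves: a lower bound $|V(G)|\ge p(n)$, which I would derive from Lemma~\ref{lem:BC}, and a matching upper bound, which needs an explicit construction. I expect the construction to be the main obstacle.

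\textbf{Lower bound.} Let $G$ have order $p$ and contain vertices of every degree $3,\dots,n$. Then there are distinct vertices of degrees $n,n-1,\dots,n-k+1$, so for any admissible $k$,
\[
\sum_{i=1}^k d_i\ \ge\ kn-\binom{k}{2}.
\]
Lemma~\ref{lem:BC} then gives
\[
2p\ \ge\ kn-\binom{k}{2}-6k+16 .
\]
As a function of $k$, the right-hand side is maximised near $k=n-5$ (equivalently $k=n-6$, which gives the same value). Substituting $k=n-5$ gives
\[
kn-\binom{k}{2}=\frac{(n-5)(n+6)}{2},
\]
and hence
\[
p\ \ge\ \frac{n^2-11n+62}{4}.
\]
Since $p$ is an integer, $p\ge p(n)$.

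The side condition $3\le k\le (p+4)/3$ still has to be checked. Here $k=n-5\ge 3$ because $n\ge 14$. For the upper constraint I would argue by contradiction: suppose $p<p(n)$. If $n-5\le (p+4)/3$, the computation above already gives the contradiction. Otherwise take $k=\lfloor (p+4)/3\rfloor$, which is now smaller than $n-5$. This $k$ satisfies $k\ge 3$, because $p\ge n-2\ge 12$ (the degrees $3,\dots,n$ are realised by distinct vertices). The same inequality at this $k$ gives a quadratic inequality in $p$ alone. Using $p\ge n-2$ and $n\ge 14$ I would check it is violated; the relevant discriminant, like that of $n^2-23n+138$, is negative. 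As a sanity check, $p(n)+4\ge 3(n-5)$ holds for all $n$, so at the optimum the choice $k=n-5$ is always admissible.

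\textbf{Upper bound (construction).} For each $n\ge 14$ I need a polyhedron on exactly $p(n)$ vertices realising every degree $3,\dots,n$. The lower-bound computation says what an extremal graph must look like: it is almost tight in Lemma~\ref{lem:BC}. So I would build it as follows.
\begin{enumerate*}[label=(\roman*)]
\item Take $n-5$ ``hub'' vertices of degrees $n,n-1,\dots,n-k+1$.
\item Make these hubs induce a near-maximal planar subgraph.
\item Attach to the hubs a layer of low-degree vertices, each adjacent to exactly three hubs, sitting in the triangular faces of the hub graph. This is what makes the Bowen--Chvátal bound tight.
\item The remaining small degrees $3,\dots,n-k$ are supplied by a handful of extra vertices placed inside a few faces.
\end{enumerate*}

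I would give the construction explicitly according to $n \bmod 4$, since the ceiling in $p(n)$ depends on this residue. Then I would verify three things: the degree count, planarity (by exhibiting the embedding), and $3$-connectivity (for instance via Steinitz-type reasoning or a direct check that no $2$-cut exists).

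The delicate part is adjusting the hub degrees by exactly one each so that all of $n-5$ consecutive values appear, while keeping the order at $p(n)$ and preserving $3$-connectivity. I would handle this by fixing a base configuration for small $n$ and then giving an inductive step $n\to n+4$ that adds the required hub and face vertices.
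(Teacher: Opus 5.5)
\textbf{The gap is the upper bound.} The statement is an equality, so you must exhibit, for every $n\ge14$, a polyhedron of order exactly $p(n)$ with a vertex of each degree $3,\dots,n$. The paper does not reprove this; it imports the equality from \cite{Maffucci2023PolytopalFamilies}. Your proposal offers only a plan, and its central step (iii) cannot work.

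Let $A$ be the $k=n-5$ hubs. The bipartite graph of $A$--$\overline A$ edges is planar, so $e(A,\overline A)\le 2p-4$ unconditionally; this is the second term in Lemma~\ref{lem:planar-defect}. Suppose all but $c$ of the non-hub vertices had exactly three hub neighbours. Then $3(p-k-c)\le 2p-4$, that is, $p\le 3n-19+3c$.
\begin{itemize}
\item Since $p(n)$ grows like $n^2/4$, this forces $c$ to be of order $n^2$, not ``a handful''.
\item With $c=0$ it fails for every $n$. Your own sanity check $n^2-23n+138>0$ says precisely that $p(n)>3n-19$; at $n=14$, $3(26-9)=51>48=2\cdot 26-4$.
\end{itemize}
Near-tightness in fact requires $e(A,\overline A)$ to be close to $2p-4$, so the roughly $n^2/4$ non-hub vertices see on average only about two hubs each. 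The real content of the upper bound is missing entirely: an explicit family for each residue of $n$ modulo $4$, with the degree count, planarity and $3$-connectivity verified. As written, you have shown only that the minimum order is at least $p(n)$.

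Your lower bound is correct. With $f(k)=kn-\binom{k}{2}-6k+16$ one has $f(k+1)-f(k)=n-k-6$, so the maximum $(n^2-11n+62)/2$ is attained at $k\in\{n-6,n-5\}$. Your handling of the side condition $k\le (p+4)/3$ also closes. The input it needs is the case hypothesis $n\ge k+6$, not $p\ge n-2$: with $k=\lfloor (p+4)/3\rfloor$ one has $2p\le 6k-4$, and the inequality would then give $(k^2-11k+40)/2\le 0$, which is impossible. The detour is unnecessary, however. Lemma~\ref{lem:planar-defect} gives $\sum_{x\in X}d_G(x)\le 2p+6|X|-16$ for every $|X|\ge 3$ with no upper restriction, so you can take $k=n-5$ directly.
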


The following is known as the Whitney flag rigidity.

\begin{lem}\label{lem:flag-rigidity}
Let $G$ be a $3$-connected planar graph.  An automorphism of $G$ is determined
by the image of one flag, that is, by the image of one incident triple
$(v,e,F)$ with $v\in e\subseteq\partial F$.  Consequently, an automorphism
that fixes a facial triangle pointwise is the identity.
\end{lem}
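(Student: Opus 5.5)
We prove the statement in two steps. First we show that an automorphism of $G$ is determined by the image of a single flag. Then we deduce the claim about a fixed facial triangle.

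\textbf{Rigidity.} By Whitney's theorem \cite{whit32}, a $3$-connected planar graph has an essentially unique embedding in the sphere: the facial cycles are exactly the induced non-separating cycles, a purely combinatorial notion. Hence every automorphism $\varphi$ maps faces to faces and so acts on the set of flags $(v,e,F)$. The flag graph is connected: from a flag one moves to an adjacent flag by changing exactly one of $v$, $e$, or $F$. Changing $v$ means taking the other endpoint of $e$. Changing $e$ means taking the other edge of $\partial F$ at $v$. Changing $F$ means taking the other face containing $e$. These three operations are well defined because each edge lies on exactly two faces and each face boundary is a cycle; both facts hold for $3$-connected plane graphs. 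Connectivity of the flag graph follows from connectivity of $G$ together with the fact that the faces around each vertex form a single cyclic rotation.

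Now suppose $\varphi$ and $\psi$ agree on one flag. Then $\psi^{-1}\varphi$ fixes that flag, and it commutes with the three moves, since the moves are defined combinatorially. So it fixes every adjacent flag, and by connectivity it fixes all flags, hence every vertex. Thus $\varphi=\psi$.

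\textbf{Triangle consequence.} Suppose an automorphism $\varphi$ fixes the three vertices of a facial triangle $F=xyz$. It fixes the edge $e=xy$ setwise. It also maps $F$ to a face with the same vertex set, and since $F$ is a triangle bounded by exactly those vertices, that face is $F$ itself. Hence $\varphi$ fixes the flag $(x,e,F)$, and by the first part $\varphi$ is the identity.

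The main delicate point is the justification that automorphisms preserve faces, which is exactly Whitney's uniqueness of the embedding. We cite \cite{whit32} for this rather than reprove it.
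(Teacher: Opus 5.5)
Your proof is correct and takes essentially the same route as the paper: Whitney's uniqueness theorem makes $\Aut(G)$ act on flags, connectivity of the flag graph gives rigidity, and a facial triangle fixed pointwise fixes one of its flags. You also make explicit some steps the paper leaves implicit, namely the three flag moves and why automorphisms commute with them, and why $\varphi(F)=F$; the latter rests on the fact that distinct faces of a $3$-connected plane graph have distinct boundary cycles.
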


\begin{proof}
By Whitney's uniqueness theorem, every automorphism of $G$ acts on its unique
spherical embedding, up to reflection.  The flag graph of a connected
cellularly embedded graph is connected, so an automorphism of the embedded
graph is determined by the image of one flag.  If a facial triangle is fixed
pointwise, then one of its vertex--edge--face flags is fixed, and the
automorphism is trivial.
\end{proof}

\begin{lem}[{\cite[Theorem 2]{Maffucci2022Complements}}]
\label{lem:comp-class}
There are exactly three polyhedral graphs whose complements are also
polyhedral.  They are
\[
g_{14.8.12},\qquad g_{14.8.13},\qquad g_{14.8.39}.
\]
All three are self-complementary and have degree sequence $4,4,4,4,3,3,3,3$.
\end{lem}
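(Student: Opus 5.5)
The plan is to reduce the statement to a finite check on $8$ vertices and then carry out that check. Throughout, let $G$ be polyhedral of order $p$ and let $\overline G$ be its complement, also assumed polyhedral.

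\emph{Step 1: bound the order.} Both $G$ and $\overline G$ are planar, so each has at most $3p-6$ edges. Since together they have $\binom p2$ edges,
\[
\frac{p(p-1)}{2}\le 6p-12,
\qquad\text{equivalently}\qquad
p^2-13p+24\le 0 .
\]
This gives $p\le 10$. One could sharpen this to $p\le 8$ by quoting the Battle--Harary--Kodama theorem (the complement of a planar graph on at least $9$ vertices is nonplanar). Degree considerations suffice anyway, and they are needed in Step 2.

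\emph{Step 2: pin down the order and the degrees.} Both graphs are $3$-connected, so every vertex has $\deg_G(v)\ge 3$ and $\deg_{\overline G}(v)=p-1-\deg_G(v)\ge 3$. Hence $3\le \deg_G(v)\le p-4$ for every $v$, which forces $p\ge 7$.

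If $p=7$, then $G$ is $3$-regular on $7$ vertices. That is impossible, since the degree sum $21$ is odd.

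If $p\in\{9,10\}$, the degree window $[3,p-4]$ allows degrees up to $5$ or $6$. In this case I would rule $p$ out using the edge-count squeeze: we need $\binom p2=m+\overline m$ with $m,\overline m\le 3p-6$. For $p=10$ this reads $45\le 48$, and for $p=9$ it reads $36\le 42$. Both leave slack, so this is where I would invoke Battle--Harary--Kodama directly rather than grind through cases.

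This leaves $p=8$. Every degree then lies in $\{3,4\}$, and the degree sequences of $G$ and $\overline G$ are complementary: if $G$ has $k$ vertices of degree $4$, then $\overline G$ has $8-k$ of them. Parity forces $k$ to be even, and $m+\overline m=28$.

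\emph{Step 3: enumeration.} The remaining task is a finite search over polyhedra on $8$ vertices with all degrees in $\{3,4\}$. For each such $G$, test whether $\overline G$ is planar and $3$-connected. I would use the standard catalogue of the $257$ polyhedral graphs on $8$ vertices, which is where the labels $g_{14.8.j}$ come from (here $14$ is the number of edges). Keeping only those with degree sequence $4^k3^{8-k}$ and checking their complements should leave exactly the three graphs listed in the statement.

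It then remains to verify three things for these survivors:
\begin{itemize}
\item each has $k=4$, so both it and its complement have $14$ edges;
\item each is isomorphic to its own complement, which can be certified by exhibiting an explicit isomorphism;
\item the three are pairwise non-isomorphic.
\end{itemize}

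\emph{Main obstacle.} The hard part is Step 3. It needs either a computer check against the catalogue or a careful hand case analysis on how the degree-$4$ vertices are arranged. A hand argument would organise the cases as follows. The complement of a $3$-regular graph on $8$ vertices is $4$-regular with $16$ edges, and $16\le 18$, so edge counts alone do not exclude the extreme values of $k$. I would therefore first try to exclude $k\in\{0,2,6,8\}$ by showing that the relevant $4$-regular complement contains a $K_{3,3}$ or $K_5$ subdivision. After that, I would enumerate the $k=4$ configurations up to isomorphism.
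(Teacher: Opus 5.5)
The paper does not prove this lemma; it is quoted from \cite[Theorem 2]{Maffucci2022Complements}, so your proposal has to stand on its own.

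Your reduction is correct. The Battle--Harary--Kodama theorem gives $p\le 8$ directly, which makes Step~1 superfluous. Your remark that ``degree considerations suffice anyway'' is false, as you concede yourself for $p\in\{9,10\}$. The window $3\le\deg_G(v)\le p-4$ gives $p\ge7$, and parity excludes $p=7$. For $p=8$ all degrees lie in $\{3,4\}$, $k$ is even, and $\overline G$ has $8-k$ vertices of degree $4$.

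The genuine gap is Step~3, which carries the entire content of the lemma. You do not perform the enumeration; you only predict that it ``should leave exactly the three graphs listed.'' Nothing in the proposal shows any of the following:
\begin{enumerate}[label=(\alph*)]
\item that $k=0$ and $k=2$ are impossible (swapping $G$ and $\overline G$ would then dispose of $k=8$ and $k=6$);
\item that exactly three polyhedra with degree sequence $4^43^4$ have polyhedral complements;
\item that these three are $g_{14.8.12}$, $g_{14.8.13}$, $g_{14.8.39}$.
\end{enumerate}
The hand route is stated only as an intention. It is also misdescribed for $k=2$: there the complement is not $4$-regular but has degree sequence $4^63^2$ and $15$ edges. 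Likewise, you say self-complementarity ``can be certified'' by explicit isomorphisms, but you exhibit none.

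To turn the plan into a proof, you must actually run the finite check, testing each candidate complement for planarity and $3$-connectivity. For instance, the only cubic polyhedra on $8$ vertices are the cube and one other. The cube is excluded because $\overline{Q_3}\cong K_4\square K_2$ is nonplanar: contracting one copy of $K_4$ yields $K_5$. The remaining cases must be handled the same way: the second cubic graph, the $13$-edge polyhedra with degree sequence $4^23^6$, and the $14$-edge polyhedra with $4^43^4$.

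For the survivors, the complementing permutations $\theta_{12}$, $t$, $\gamma$ of \Cref{thm:autpm-three} supply the missing certificates of self-complementarity. They can be checked directly against the edge lists in \Cref{thm:exact-complement-aut}. The pairwise non-isomorphic automorphism groups $C_2$, $C_2^3$, $\Dih_4$ then show that the three graphs are distinct.
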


\section{Degree-minimal polyhedra}

For a vertex set $X\subseteq V(G)$, write $e(X):=|E(G[X])|$ and let $e(X,\overline X)$ denote the number of edges with one endpoint in
$X$ and the other in $V(G)\sm X$.  
Let $B_X$ be the spanning bipartite subgraph of $G$ whose edge set consists precisely of the $X$--$(V(G)\sm X)$ edges.

We now provide the planar defect decomposition. 

\begin{lem}\label{lem:planar-defect}
Let $G$ be a planar graph of order $p$, and let $X\subseteq V(G)$ have
$h:=|X|\ge3$.  Then
\[
2p+6h-16-\sum_{x\in X}d_G(x)
=
2\bigl(3h-6-e(X)\bigr)
+
\bigl(2p-4-e(X,\overline X)\bigr).
\]
Both terms on the right are nonnegative.  
Consequently, $\sum_{x\in X}d_G(x)\le 2p+6|X|-16$
for every $X$ of size at least three.
\end{lem}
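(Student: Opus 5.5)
The identity is pure bookkeeping, so I will first establish it algebraically and then bound each of the two terms using planarity.

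For the identity, I count degree incidences at $X$. Every edge inside $X$ contributes $2$ to $\sum_{x\in X}d_G(x)$, and every $X$--$\overline X$ edge contributes $1$, so
\[
\sum_{x\in X}d_G(x)=2e(X)+e(X,\overline X).
\]
Substituting this, the right-hand side becomes
\[
6h-12-2e(X)+2p-4-e(X,\overline X)=2p+6h-16-\sum_{x\in X}d_G(x),
\]
which is the claimed equality. No further work is needed here.

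For the nonnegativity of the first term, $G[X]$ is a simple planar graph on $h\ge3$ vertices. The standard Euler bound therefore gives $e(X)\le 3h-6$, so $3h-6-e(X)\ge0$.

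The second term is the only point requiring care. The graph $B_X$ is a subgraph of $G$, so it is planar; it is bipartite with parts $X$ and $\overline X$, and it has $p$ vertices. Its edges are exactly the $X$--$\overline X$ edges, so $|E(B_X)|=e(X,\overline X)$. A simple planar bipartite graph has no triangles, so when it has at least three vertices it satisfies $|E|\le 2p-4$; here $p\ge h\ge 3$. The standard derivation takes each face of a connected embedding to have length at least $4$. To avoid assuming connectivity, I would apply this bound componentwise, or add edges to make the graph connected while keeping it bipartite and planar; neither step decreases the bound. Some components may be tiny, such as isolated vertices or single edges, where the per-component bound $2p_i-4$ fails. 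In that case I would instead use the global count $|E|\le 2p-4$ for triangle-free planar graphs with $p\ge3$, which holds via Euler's formula with $c$ components. That formula gives $|E|\le 2(p-1-c)+\ldots$, and the cases with $|E|<3$ are trivially within $2p-4$.

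Finally, since both terms on the right-hand side are nonnegative, the left-hand side is nonnegative, which yields $\sum_{x\in X}d_G(x)\le 2p+6|X|-16$. The only mildly delicate step is justifying $e(X,\overline X)\le 2p-4$ for a possibly disconnected bipartite planar graph, which I handle as described above.
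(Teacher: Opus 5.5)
Your proposal is correct and follows the paper's proof. The identity comes from $\sum_{x\in X}d_G(x)=2e(X)+e(X,\overline X)$, and nonnegativity comes from the Euler bounds $e(X)\le 3h-6$ for $G[X]$ and $e(X,\overline X)\le 2p-4$ for the bipartite planar graph $B_X$; the paper simply quotes the latter bound, and your reduction to the connected case by adding bridges between components already proves it, so the unfinished count with $c$ components (the line ending in ``$\ldots$'') can be dropped.
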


\begin{proof}
The identity follows from
\[
\sum_{x\in X}d_G(x)=2e(X)+e(X,\overline X).
\]
The graph $G[X]$ is planar, so $e(X)\le3h-6$.  The graph $B_X$ is a simple
bipartite planar graph on $p$ vertices, so
$e(X,\overline X)\le2p-4$.
\end{proof}

Taking $X$ to be a set of the $k$ highest-degree vertices recovers the
numerical inequality in \Cref{lem:BC}.  The defect decomposition shows
that this numerical inequality itself is valid for every $k\ge3$, without an
upper restriction on $k$.

\begin{cor}\label{cor:small-defect}
Assume $p\ge4$, and put
\[
D_X:=2p+6|X|-16-\sum_{x\in X}d_G(x).
\]
Then the following hold.
\begin{enumerate}[label=(\roman*)]
\item If $D_X=0$, then $G[X]$ is a triangulation and $B_X$ is connected,
with every facial boundary walk of length four.
\item If $D_X=1$, then $G[X]$ is a triangulation, $B_X$ is connected, and
all facial boundary walks of $B_X$ have length four except for one of length
six.
\item If $D_X=2$, then exactly one of
\[
\bigl(3|X|-6-e(X),\ 2p-4-e(X,\overline X)\bigr)
\in\{(0,2),(1,0)\}
\]
holds.
\end{enumerate}
\end{cor}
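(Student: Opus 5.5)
The plan is to read everything off the identity in \Cref{lem:planar-defect}. Write $a:=3|X|-6-e(X)$ and $b:=2p-4-e(X,\overline X)$. Both are nonnegative integers, and $D_X=2a+b$. Each of the three cases then amounts to listing the pairs $(a,b)$ that can occur, and interpreting $a=0$ and small $b$ structurally.

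For $a$ this is standard. A simple planar graph on $h\ge3$ vertices with exactly $3h-6$ edges is a maximal planar graph, i.e.\ a triangulation. So $a=0$ is equivalent to $G[X]$ being a triangulation.

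For $b$ I will use facial walk lengths in $B_X$, which is simple, bipartite, planar and has $p\ge4$ vertices.
\begin{itemize}
\item Suppose $B_X$ has $c$ components. Adding $c-1$ edges joining components (one endpoint in $X$, the other outside) keeps the graph bipartite and planar, so $e(B_X)\le 2p-4-(c-1)$. Hence $b\ge c-1$.
\item Now suppose $B_X$ is connected. Since $e(B_X)=2p-4-b$ and $2p-4\ge p$ for $p\ge4$, $B_X$ is not a tree, so every facial walk has even length at least $4$. Euler's formula gives $f=e-p+2$, and the sum of the facial walk lengths is $2e$.
\item Set $\sum_F(\ell_F-4)=2e-4f=4p-8-2e=2b$.
\end{itemize}

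The cases now follow.
\begin{itemize}
\item If $b=0$, then $B_X$ is connected and every facial walk has length exactly $4$.
\item If $b=1$, then $B_X$ is connected, since disconnection would force $b\ge1$ with equality only when $c=2$. In that case the augmented graph is connected with $2p-4$ edges, so all its faces are $4$-walks, and I still need to rule this out. Assuming connectedness, $\sum_F(\ell_F-4)=2$ with every $\ell_F$ even, so exactly one face has length $6$ and all others have length $4$.
\end{itemize}

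Applying this to the three statements:
\begin{enumerate}[label=(\roman*)]
\item $D_X=0$ forces $a=b=0$.
\item $D_X=1$ forces $a=0$ and $b=1$, since $2a\le1$.
\item $D_X=2$ gives $(a,b)\in\{(0,2),(1,0)\}$. These are mutually exclusive, which gives ``exactly one''.
\end{enumerate}

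The main obstacle is showing that $b=1$ forces $B_X$ to be connected. Otherwise we could have two components whose augmented union is a quadrangulation. I would handle this as follows. If $B_X$ has two components $K_1,K_2$, then $e(K_i)\le 2|K_i|-4$ when $|K_i|\ge3$ and $e(K_i)\le|K_i|-1$ in general. Summing gives $e(B_X)\le 2p-8$, or $2p-5$ if one component is a single vertex or an edge. The bound $2p-5$ (from one trivial and one large component) is the bad case, and it should be excluded by a sharper count. A single-vertex component together with a component on $p-1$ vertices gives at most $2(p-1)-4=2p-6$ edges, so $b\ge2$. A component that is a single edge gives at most $1+2(p-2)-4=2p-7$. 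So every disconnected $B_X$ has $b\ge2$, which settles (ii). I expect this case check, and not the Euler computation, to be where care is needed.
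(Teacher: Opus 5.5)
Your argument is correct and is essentially the paper's. You write $D_X=2a+b$, show that $B_X$ must be connected when $b\le1$, and then read off the facial walk lengths from Euler's formula via $\sum_F(\ell_F-4)=2b$. Your reduction $b\ge c-1$, followed by the two-component count, is just a repackaging of the paper's summation of the component bounds $0,1,2q-4$.

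One harmless slip: $2p-4-b\ge p$ fails when $b=1$ and $p=4$. For example, $G=K_4$ with $|X|=3$ gives $D_X=1$ and $B_X=K_{1,3}$, so $B_X$ can be a tree. You never need non-tree-ness, though, since every facial walk of a connected simple bipartite plane graph on at least three vertices already has even length at least four.
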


\begin{proof}
We set $a:=3|X|-6-e(X)$, and $b:=2p-4-e(X,\overline X)$.
\Cref{lem:planar-defect} gives $D_X=2a+b$, with $a,b\ge0$.
The asserted values of $a$ and $b$ follow immediately.
It remains to justify the facial statements.  
If a component $C$ of a simple bipartite planar graph has order $q$, then
\[
|E(C)|\le
\begin{cases}
0,&q=1,\\
1,&q=2,\\
2q-4,&q\ge3.
\end{cases}
\]
If the graph is disconnected, has total order $p\ge4$, and these bounds are
summed over its components, the result is at most $2p-6$.  Hence a bipartite
planar graph with $2p-4$ or $2p-5$ edges is connected.
If $e(B_X)=2p-4$, Euler's formula gives $p-2$ faces and $2e(B_X)=4p-8=4(p-2)$.
Every facial boundary walk in a connected simple bipartite plane graph on at
least four vertices has even length at least four.  Hence every facial
boundary walk has length four.
If $e(B_X)=2p-5$, Euler's formula gives $p-3$ faces and
\[
2e(B_X)=4p-10=4(p-3)+2.
\]
Thus exactly one facial boundary walk has length six and all the others have
length four.
\end{proof}

For $n\ge14$, we define
\[
\varepsilon_n:=
\begin{cases}
0,&n\equiv1,2\pmod4,\\
1,&n\equiv0,3\pmod4.
\end{cases}
\]

\begin{thm}\label{thm:Pn-fixed-high-degree}
Let $n\ge14$, and let $G$ be a $3$-polytopal graph of minimum order among
those containing a vertex of every degree $3,4,\ldots,n$.  Choose one vertex
$v_j$ of degree $j$ for each $6\le j\le n$, and put $R:=\{v_6,v_7,\ldots,v_n\}$.
Then, for every $W\subseteq V(G)\sm R$,
\[
\sum_{w\in W}\bigl(d_G(w)-6\bigr)\le\varepsilon_n.
\]
Consequently:
\[
\begin{array}{ll}
n\equiv1,2\pmod4:
&\text{every vertex outside $R$ has degree at most $6$;}\\[1mm]
n\equiv0,3\pmod4:
&\text{at most one vertex outside $R$ has degree $7$, and every}\\
&\text{other vertex outside $R$ has degree at most $6$.}
\end{array}
\]
In particular, degree $7$ occurs exactly once when
$n\equiv1,2\pmod4$, and once or twice when $n\equiv0,3\pmod4$.
Every degree $8,9,\ldots,n$ occurs exactly once, no vertex has degree greater
than $n$, and every uniquely occurring vertex is fixed by $\Aut(G)$.
\end{thm}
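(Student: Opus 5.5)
The plan is to apply \Cref{lem:planar-defect} to the vertex set $X:=R\cup W$ and to show that, because $|V(G)|=p(n)$ by \Cref{lem:JGT-pn}, the resulting inequality leaves a slack of exactly $\varepsilon_n$. All the listed consequences then come from choosing $W$ suitably.

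First, fix $W\subseteq V(G)\sm R$ and put $X:=R\cup W$. Since $R$ and $W$ are disjoint, $|X|=n-5+|W|\ge 9\ge3$, so \Cref{lem:planar-defect} applies with $p:=p(n)$. Using
\[
\sum_{x\in R}d_G(x)=\sum_{j=6}^{n}j=\frac{n(n+1)}{2}-15,
\]
the inequality $\sum_{x\in X}d_G(x)\le 2p+6|X|-16$ rearranges to
\[
\sum_{w\in W}\bigl(d_G(w)-6\bigr)\le 2p+6n-31-\frac{n^2+n}{2}.
\]
Write $m:=n^2-11n+62$, so that $p=\lceil m/4\rceil$. A direct computation shows that $\frac{m}{2}+6n-31-\frac{n^2+n}{2}=0$. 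Hence the right-hand side equals $2\lceil m/4\rceil-\frac m2=\frac{4\lceil m/4\rceil-m}{2}$.

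The remaining step is arithmetic modulo $4$, and it is the only place that needs care. Since $m$ is always even, $m\equiv0$ or $2\pmod 4$. Reducing $m$ modulo $4$ gives:
\begin{itemize}
\item $n\equiv1\pmod4$: $m\equiv52\equiv0$;
\item $n\equiv2\pmod4$: $m\equiv44\equiv0$;
\item $n\equiv0\pmod4$: $m\equiv62\equiv2$;
\item $n\equiv3\pmod4$: $m\equiv38\equiv2$.
\end{itemize}
When $m\equiv0\pmod4$ the right-hand side is $0$, and when $m\equiv2\pmod4$ it is $\frac{(m+2)-m}{2}=1$. In every case the right-hand side is exactly $\varepsilon_n$, which proves the main inequality.

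The consequences follow from specific choices of $W$.
\begin{itemize}
\item Taking $W=\{w\}$ gives $d_G(w)\le 6+\varepsilon_n$ for every vertex $w$ outside $R$.
\item Taking $W=\{w,w'\}$ with two vertices of degree $7$ outside $R$ would give a sum of $2>\varepsilon_n$. So at most one such vertex exists, and none exists when $\varepsilon_n=0$.
\item Any vertex of degree at least $8$, or of degree greater than $n$, lies outside the allowed range for vertices outside $R$. It must therefore be some $v_j\in R$, which gives both uniqueness of each degree $8,\dots,n$ and the absence of degrees exceeding $n$.
\item Degree $7$ is attained by $v_7$, together with at most one further vertex, and only when $n\equiv0,3\pmod4$.
\end{itemize}
Finally, automorphisms preserve degrees, so every vertex whose degree occurs exactly once is fixed by $\Aut(G)$.
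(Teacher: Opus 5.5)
Your proposal is correct and matches the paper's proof: you apply \Cref{lem:planar-defect} to $X=R\cup W$ with $p=p(n)$ from \Cref{lem:JGT-pn}, show that the slack $2p+6(n-5)-16-\sum_{j=6}^{n}j$ equals $\varepsilon_n$, and read off the consequences by taking $W$ to be a singleton or a pair of degree-$7$ vertices. Your explicit reduction of $m=n^2-11n+62$ modulo $4$ simply fills in the computation the paper summarizes as ``direct substitution.''
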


\begin{proof}
We set  $p:=|V(G)|$, $k:=n-5$, and  $L:=\sum_{j=6}^{n}j$.
By \Cref{lem:JGT-pn}, direct substitution gives $2p+6k-16=L+\varepsilon_n$.
We invoke \Cref{lem:planar-defect} to $X=R\cup W$.  
Since
\[
\sum_{x\in X}d_G(x)=L+\sum_{w\in W}d_G(w)
\quad\text{and}\quad
|X|=k+|W|,
\]
we obtain
\[
L+\sum_{w\in W}d_G(w)
\le
L+\varepsilon_n+6|W|.
\]
This is the required inequality.
When $\varepsilon_n=0$, taking $W$ to be a singleton excludes every extra
vertex of degree at least seven.  When $\varepsilon_n=1$, a singleton excludes
every extra vertex of degree at least eight, while two extra vertices of
degree seven would contribute $2$ and are therefore impossible.  The
remaining assertions follow because automorphisms preserve degrees.
\end{proof}

\begin{cor}\label{cor:large-face-tail}
Let $n\ge14$, and let $P$ have the minimum possible number of faces among
polyhedral graphs containing an $i$-gonal face for every $3\le i\le n$.
Choose one $j$-gonal face $F_j$ for each $6\le j\le n$, and let
$\mathcal W$ be any collection of other faces.  Then
\[
\sum_{F\in\mathcal W}\bigl(|\partial F|-6\bigr)\le\varepsilon_n.
\]
Thus, if $n\equiv1,2\pmod4$, every other face has size at most six.  If
$n\equiv0,3\pmod4$, at most one other face is heptagonal and all remaining
faces have size at most six.  In particular, the heptagonal face is unique in
the first case and occurs once or twice in the second, while every face size
$8,9,\ldots,n$ occurs exactly once.
\end{cor}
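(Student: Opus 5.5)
The plan is to reduce the statement to \Cref{thm:Pn-fixed-high-degree} by planar duality. Let $P$ be a polyhedral graph with the minimum number of faces among polyhedra containing an $i$-gonal face for every $3\le i\le n$, and let $P^*$ be its dual. Since $P$ is $3$-connected and planar, its embedding is unique by Whitney's theorem. Hence $P^*$ is well defined and is again a polyhedral graph, and $(P^*)^*\cong P$. Under duality the faces of $P$ correspond bijectively to the vertices of $P^*$, and an $i$-gonal face corresponds to a vertex of degree $i$. So $P^*$ contains a vertex of every degree $3,\ldots,n$, and its order equals the number of faces of $P$.

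The first step is to check that $P^*$ has minimum order among $3$-polytopal graphs containing a vertex of each degree $3,\ldots,n$. Suppose some such graph $H$ had fewer vertices. Then $H^*$ would be a polyhedral graph with an $i$-gonal face for every $3\le i\le n$ and with $|V(H)|<|V(P^*)|$ faces. This contradicts the minimality of $P$. The converse direction is not needed here.

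Next we apply \Cref{thm:Pn-fixed-high-degree} to $G:=P^*$. Take $v_j$ to be the dual vertex of the chosen face $F_j$ for $6\le j\le n$, so that $R$ corresponds to $\{F_6,\ldots,F_n\}$. Let $W$ be the set of dual vertices of the faces in $\mathcal W$. Since these faces are distinct from the $F_j$, we have $W\subseteq V(G)\sm R$, and $d_G(w)=|\partial F|$ for each corresponding pair. The theorem's inequality then translates directly into $\sum_{F\in\mathcal W}(|\partial F|-6)\le\varepsilon_n$. The consequences about heptagons and about face sizes $8,\ldots,n$ occurring exactly once are the dual transcriptions of the corresponding assertions in that theorem. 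Alternatively, one can rederive them by taking $\mathcal W$ to be a single face or a pair of faces, exactly as in the proof of the theorem.

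The only point needing care is the minimality transfer in the first step, and in particular that duality preserves the class of polyhedral graphs, which holds by the standard $3$-connected planar duality. The rest is bookkeeping.
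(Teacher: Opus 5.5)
Your proposal is correct and follows the paper's proof, which simply applies \Cref{thm:Pn-fixed-high-degree} to the dual graph $P^*$. Your check that $P^*$ has minimum order (via $H\mapsto H^*$) and your identification of $\mathcal W$ with a vertex set $W\subseteq V(P^*)\sm R$ spell out details that the paper leaves implicit.
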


\begin{proof}
Apply ~\Cref{thm:Pn-fixed-high-degree} to the dual graph $P^*$.
\end{proof}

\begin{lem}\label{lem:fixed-star}
Let $G$ be a $3$-connected planar graph, and let $X\subseteq V(G)$ be fixed
pointwise by $\Aut(G)$.  If $\Delta(G[X])\ge3$, then $\Aut(G)=1$.
\end{lem}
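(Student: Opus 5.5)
Pick a vertex $x\in X$ with $d_{G[X]}(x)\ge3$, together with three of its neighbours $y_1,y_2,y_3\in X$. Every automorphism $\sigma\in\Aut(G)$ fixes $x,y_1,y_2,y_3$, and hence fixes the edges $xy_1,xy_2,xy_3$. The plan is to show that $\sigma$ then fixes a flag and is therefore trivial by \Cref{lem:flag-rigidity}.

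By Whitney's theorem (as used in the proof of \Cref{lem:flag-rigidity}), $\sigma$ acts on the unique spherical embedding of $G$, possibly reversing orientation. The edges at $x$ appear in a cyclic rotation $\rho_x$, and $\sigma$ maps $\rho_x$ to itself or to its reverse. Since $\sigma$ fixes $x$, it permutes the edges at $x$ as an automorphism of that cycle: either a rotation (orientation preserved) or a reflection (orientation reversed).
\begin{enumerate}[label=(\arabic*)]
\item \emph{Rotation case.} A nontrivial rotation of a cycle of length $d(x)\ge3$ fixes no element. Since $\sigma$ fixes $xy_1$, the rotation is the identity.
\item \emph{Reflection case.} A reflection of a cycle fixes at most two elements, and they are antipodal. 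Since $\sigma$ fixes the three distinct edges $xy_1,xy_2,xy_3$, this case is impossible.
\end{enumerate}
Hence $\sigma$ fixes every edge at $x$ and preserves orientation, so it fixes each face at $x$, being determined by two consecutive edges in $\rho_x$. Therefore $\sigma$ fixes a flag $(x,xy_1,F)$, where $F$ is a face containing $xy_1$, and \Cref{lem:flag-rigidity} gives $\sigma=\mathrm{id}$.

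The main point to check carefully is the reflection case: that an orientation-reversing automorphism fixing $x$ induces a reflection on $\rho_x$, and that such a reflection has at most two fixed edges. This is exactly where the hypothesis $\Delta(G[X])\ge3$ is used.
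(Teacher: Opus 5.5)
Your proposal is correct and is essentially the paper's argument. Whitney uniqueness makes the local action at the chosen vertex dihedral, and the three fixed incident edges force that action to be trivial; your rotation/reflection case split just spells out the paper's remark that a nonidentity dihedral permutation of a cycle fixes at most two positions. The resulting fixed flag then gives $\sigma=\mathrm{id}$ by \Cref{lem:flag-rigidity}.
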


\begin{proof}
Choose $v\in X$ having three distinct neighbours
$x_1,x_2,x_3\in X$.  Every automorphism fixes $v$ and the three incident
edges $vx_1,vx_2,vx_3$.  By Whitney uniqueness, the induced action on the
cyclic order of the edges incident with $v$ is dihedral.  A nonidentity
dihedral permutation of a cycle fixes at most two positions, so the local
action at $v$ is the identity.  Two consecutive incident edges and the face
between them are therefore fixed.  Hence an incident vertex--edge--face flag
is fixed, and \Cref{lem:flag-rigidity} implies that the automorphism is
the identity.
\end{proof}

\begin{thm}\label{thm:core-saturation}
Let $n\ge14$, and let $G$ be a $3$-polytopal graph of minimum order among
those containing a vertex of every degree $3,4,\ldots,n$.  Put
$p:=|V(G)|$.
If $n\equiv1,2\pmod4$, let $X:=\{v_j:d_G(v_j)=j,\ 7\le j\le n\}$, and 
$h:=|X|=n-6$.
Then $e(X)=3h-6$, and  $e(X,\overline X)=2p-4$.
Thus $G[X]$ is a triangulation, and $B_X$ is a connected spanning bipartite
plane graph all of whose facial boundary walks have length four.
If $n\equiv0,3\pmod4$, let $X:=\{v_j:d_G(v_j)=j,\ 8\le j\le n\}$,
$h:=|X|=n-7$.
Then precisely one of the following alternatives holds:
\[
\begin{array}{c|c}
e(X)&e(X,\overline X)\\ \hline
3h-6&2p-6\\
3h-7&2p-4.
\end{array}
\]
In the first alternative, $G[X]$ is a triangulation.  In the second,
$G[X]$ is a $2$-connected plane graph whose faces are all triangles except
for one quadrilateral, and $B_X$ is a connected spanning bipartite plane
graph all of whose facial boundary walks have length four.
\end{thm}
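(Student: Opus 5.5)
We apply \Cref{lem:planar-defect} and \Cref{cor:small-defect} to the set $X$ of the statement, after computing the defect $D_X=2p+6h-16-\sum_{x\in X}d_G(x)$ exactly from \Cref{lem:JGT-pn}. The proof of \Cref{thm:Pn-fixed-high-degree} showed that, with $k=n-5$ and $L=\sum_{j=6}^n j$, we have $2p+6k-16=L+\varepsilon_n$. Removing $v_6$ lowers $|X|$ by one and the degree sum by $6$. So for $X=\{v_7,\ldots,v_n\}$ with $h=n-6$,
\[
D_X=2p+6(k-1)-16-(L-6)=\varepsilon_n.
\]
Removing $v_7$ as well gives $D_X=\varepsilon_n+1$ for $X=\{v_8,\ldots,v_n\}$ with $h=n-7$. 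Both values require $h\ge3$, which holds since $n\ge14$. The vertices $v_j$ are well defined by \Cref{thm:Pn-fixed-high-degree}: each degree $8,\dots,n$ occurs exactly once, and when $n\equiv1,2\pmod 4$ degree $7$ also occurs exactly once.

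In the case $n\equiv1,2\pmod4$ we have $\varepsilon_n=0$, so $D_X=0$. \Cref{cor:small-defect}(i) then gives $a=b=0$, i.e.\ $e(X)=3h-6$ and $e(X,\overline X)=2p-4$. The same item gives that $G[X]$ is a triangulation and that $B_X$ is connected with all facial walks of length four.

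In the case $n\equiv0,3\pmod4$ we have $\varepsilon_n=1$, so $D_X=2$. \Cref{cor:small-defect}(iii) says that $(a,b)\in\{(0,2),(1,0)\}$, and exactly one of these holds. These two pairs are precisely the two rows of the table.
\begin{itemize}
\item For $(a,b)=(0,2)$, $G[X]$ has $3h-6$ edges. A simple planar graph attaining the maximum edge count is a triangulation.
\item For $(a,b)=(1,0)$, $b=0$ gives connectivity of $B_X$ and the facial-walk statement exactly as in \Cref{cor:small-defect}(i), via the Euler count $2(2p-4)=4(p-2)$.
\end{itemize}

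The remaining point, which I expect to be the main obstacle, is the structure of $G[X]$ when $e(X)=3h-7$. First, $G[X]$ is connected: a disconnected planar graph on $h\ge3$ vertices has at most $3h-9$ edges, obtained by summing the bounds $\max(3q-6,\,q-1)$ over the components. Next, the facial walk lengths sum to $2e=6h-14$, and Euler's formula gives $2h-5$ faces. The total excess over $3$ per face is therefore $(6h-14)-3(2h-5)=1$. In a simple connected plane graph with $h\ge3$ every facial walk has length at least $3$. Hence exactly one face has a walk of length $4$ and all the others are triangles.

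It remains to show that this walk is a genuine $4$-cycle and that $G[X]$ is $2$-connected. A facial walk of length $4$ that is not a cycle would have to traverse an edge twice or pass a cut vertex twice. In a simple graph such a walk would enclose a pendant edge or a degenerate configuration, and this forces another face to be non-triangular or creates a face of length less than $3$. Separately, a cut vertex would make some facial walk repeat that vertex, which a triangle cannot do. A walk of length $4$ repeating a vertex would have the form $u\,x\,u\,y$ with pendant-like edges $ux,uy$; then $x$ and $y$ are leaves, so $h=3$ and $e=2=3h-7$. I will check this small case separately: it is excluded since $h=n-7\ge7$. With this, $G[X]$ is $2$-connected with all faces triangles except one quadrilateral, as claimed.
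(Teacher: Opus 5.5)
Your proposal is correct and follows the paper's route. The defect is $0$ or $2$: you get this by removing $v_6$ (and $v_7$) from the identity $2p+6k-16=L+\varepsilon_n$, while the paper substitutes $p(n)$ directly. Then \Cref{cor:small-defect} gives the table, and the count $6h-14=3(2h-5)+1$ gives the single quadrilateral.

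The only divergence is the $2$-connectivity of $G[X]$. The paper gets it from the edge bound $3h-8$ for connected planar graphs with a cut vertex; your facial-walk argument also works. You should say explicitly that a facial walk $u\,x\,u\,y$ forces the rotation at $u$ to consist of $ux,uy$ alone, i.e.\ $d(u)=2$. That, and not merely $x,y$ being leaves, is what gives $h=3$.
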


\begin{proof}
By ~\Cref{thm:Pn-fixed-high-degree}, all vertices in the displayed set
$X$ are uniquely determined by their degrees.
Suppose first that $n\equiv1,2\pmod4$.  Then
\[
\sum_{x\in X}d_G(x)
=
\sum_{j=7}^{n}j
=
\frac{n(n+1)}2-21,
\qquad
h=n-6,
\]
and \Cref{lem:JGT-pn} gives
\[
2p=
\frac{n(n+1)}2-6n+31.
\]
Consequently,
\[
2p+6h-16-\sum_{x\in X}d_G(x)=0.
\]
Both nonnegative terms in \Cref{lem:planar-defect} therefore vanish,
and ~\Cref{cor:small-defect}(i) gives the facial conclusion for
$B_X$.
Now suppose that $n\equiv0,3\pmod4$.  Then
\[
\sum_{x\in X}d_G(x)
=
\sum_{j=8}^{n}j
=
\frac{n(n+1)}2-28,
\qquad
h=n-7,
\]
and
\[
2p=
\frac{n(n+1)}2-6n+32.
\]
Thus
\[
2p+6h-16-\sum_{x\in X}d_G(x)=2.
\]
~\Cref{cor:small-defect}(iii) gives the two alternatives in the
table.  In the second alternative,
~\Cref{cor:small-defect}(i) gives the facial conclusion for $B_X$.

It remains to describe $G[X]$ when $e(X)=3h-7$.  Here $h\ge8$.  A
disconnected planar graph on $h$ vertices has at most $3h-9$ edges.  A
connected planar graph with a cut vertex has at most $3h-8$ edges, as follows
by splitting it at a cut vertex and applying the planar edge bound to the
resulting pieces.  Hence $G[X]$ is $2$-connected.  Euler's formula gives
$2h-5$ faces, and
\[
2e(X)=6h-14=3(2h-5)+1.
\]
Every facial boundary has length at least three, so exactly one face has
length four and all the others have length three.
\end{proof}

\begin{thm}
\label{thm:universal-asymmetry}
Let $n\ge14$, and let $G$ be a $3$-polytopal graph of minimum order among
those containing a vertex of every degree $3,4,\ldots,n$.  
Then $\Aut(G)=1$.
\end{thm}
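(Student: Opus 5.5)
The plan is to combine \Cref{thm:Pn-fixed-high-degree}, \Cref{thm:core-saturation} and \Cref{lem:fixed-star}. The core set $X$ of \Cref{thm:core-saturation} consists of vertices with degrees $8,9,\ldots,n$, together with $v_7$ when $n\equiv1,2\pmod4$. Each of these degrees occurs exactly once, so by \Cref{thm:Pn-fixed-high-degree} every automorphism of $G$ fixes $X$ pointwise. It then suffices, by \Cref{lem:fixed-star}, to show that $\Delta(G[X])\ge3$.

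Since $n\ge14$, we have $h=|X|\ge n-7\ge7$. The degree condition is checked in three cases.
\begin{enumerate}[label=(\roman*)]
\item If $n\equiv1,2\pmod4$, \Cref{thm:core-saturation} gives that $G[X]$ is a triangulation on $h\ge8$ vertices. A triangulation on at least four vertices is $3$-connected, so every vertex of $G[X]$ has degree at least three.
\item If $n\equiv0,3\pmod4$ and the first alternative holds, $G[X]$ is again a triangulation on $h\ge7$ vertices, and the same argument applies.
\item In the second alternative, $G[X]$ has $3h-7$ edges on $h\ge7$ vertices. Its average degree is $(6h-14)/h>3$, so some vertex has degree at least four, and in particular at least three.
\end{enumerate}
In the triangulation cases the same averaging argument works directly, since $e(X)=3h-6$ gives average degree $6-12/h>3$ for $h\ge7$. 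Hence no structural fact about triangulations is actually needed.

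\Cref{lem:fixed-star} then yields $\Aut(G)=1$. The main point to verify is that \Cref{thm:Pn-fixed-high-degree} really makes every vertex of $X$ unique in its degree. This holds because degrees $8,\ldots,n$ occur exactly once, and degree $7$ is unique when $n\equiv1,2\pmod4$, which is precisely when $v_7$ is included in $X$. It is also worth confirming that $G$ is $3$-connected and planar, so that \Cref{lem:fixed-star} applies; this holds because $G$ is $3$-polytopal. Beyond these checks, the argument only requires confirming that $h\ge7$ suffices to force a vertex of degree at least three.
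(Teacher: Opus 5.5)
Your proposal is correct and follows essentially the same route as the paper: every vertex of the core $X$ is fixed pointwise because its degree is unique, the edge count $e(X)\ge 3|X|-7$ forces a vertex of degree at least three in $G[X]$ by averaging, and the fixed-star lemma then gives $\Aut(G)=1$. As you note yourself, the case split and the appeal to $3$-connectivity of triangulations are unnecessary, since the paper's single averaging bound, $e(X)>|X|$, covers all cases at once.
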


\begin{proof}
Let $X$ be the high-degree core from ~\Cref{thm:core-saturation}.  
Every vertex of $X$ is fixed by
~\Cref{thm:Pn-fixed-high-degree}.  Moreover,
\[
e(G[X])\ge3|X|-7>|X|,
\]
so the average degree of $G[X]$ is greater than two.  Hence
$\Delta(G[X])\ge3$.  \Cref{lem:fixed-star} now gives
$\Aut(G)=1$.
\end{proof}

\begin{cor}\label{cor:dual-asymmetry}
Let $n\ge14$, and let $G$ have the minimum possible number of faces among
polyhedral graphs containing an $i$-gonal face for every $3\le i\le n$.
Then $\Aut(G)=1$.
\end{cor}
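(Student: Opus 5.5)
The plan is to reduce the statement to \Cref{thm:universal-asymmetry} by planar duality, in the same way that \Cref{cor:large-face-tail} was obtained from \Cref{thm:Pn-fixed-high-degree}.

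First, let $G^*$ be the dual of $G$. By Whitney's theorem, $G$ has a unique spherical embedding, so $G^*$ is well defined up to isomorphism. Since $G$ is a polyhedral graph, $G^*$ is again $3$-connected and planar, hence $3$-polytopal. Under duality, the $i$-gonal faces of $G$ correspond exactly to the vertices of degree $i$ in $G^*$, and the number of faces of $G$ equals $|V(G^*)|$. Because every polyhedron is the dual of its own dual, the map $P\mapsto P^*$ is a bijection between the following two classes:
\begin{itemize}
\item polyhedral graphs with an $i$-gonal face for every $3\le i\le n$, with $P$ having $f$ faces;
\item $3$-polytopal graphs with a vertex of every degree $3,\ldots,n$, with $P^*$ having order $f$.
\end{itemize}
Minimality therefore transfers: $G$ has the minimum number of faces in the first class exactly when $G^*$ has minimum order in the second. \Cref{thm:universal-asymmetry} then gives $\Aut(G^*)=1$.

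Second, we must transfer asymmetry back from $G^*$ to $G$. This is the only step needing care. Let $\sigma\in\Aut(G)$. By Whitney uniqueness, $\sigma$ maps facial cycles to facial cycles, so it induces a permutation $\sigma^*$ of the faces of $G$, that is, of $V(G^*)$. Two faces are adjacent in $G^*$ exactly when they share an edge, and $\sigma$ preserves this relation, so $\sigma^*\in\Aut(G^*)$. The map $\sigma\mapsto\sigma^*$ is a homomorphism, and it remains to show it is injective.

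For injectivity, suppose $\sigma^*$ is the identity, so $\sigma$ fixes every face setwise. Each vertex $v$ of a $3$-connected plane graph is the unique common vertex of at least three of its incident faces; in fact, two distinct faces of a polyhedron meet in a vertex, in an edge, or not at all. Hence every vertex is fixed, and $\sigma=1$. (Alternatively, apply \Cref{lem:flag-rigidity}: fixing all faces fixes every edge as the intersection of two faces, and hence fixes a flag.) Therefore $\Aut(G)$ embeds in $\Aut(G^*)=1$, which gives $\Aut(G)=1$.
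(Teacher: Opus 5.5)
Your proof is correct and follows the paper's route: dualize, observe that minimality transfers so that Theorem~\ref{thm:universal-asymmetry} applies to $G^*$, and pull asymmetry back to $G$ through the map $\Aut(G)\to\Aut(G^*)$. The paper simply cites the natural identification $\Aut(G)\cong\Aut(G^*)$. You instead verify the injectivity you actually need, using the fact that two faces of a polyhedron meet in a vertex, an edge, or not at all; this is a sound but inessential elaboration.
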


\begin{proof}
The dual $G^*$ has the minimum possible number of vertices among polyhedral
graphs containing a vertex of every degree $3,4,\ldots,n$.  Apply
~\Cref{thm:universal-asymmetry} to $G^*$ and use the natural
identification $\Aut(G)\cong\Aut(G^*)$.
\end{proof}

\begin{cor}
\label{cor:top-layer-triangulation}
Let $G$ be as in ~\Cref{thm:universal-asymmetry}, let
$p:=|V(G)|$, and put $k:=n-5$.  Let $A\subseteq V(G)$ have size $k$ and
satisfy
\[
d_G(a)\ge d_G(b)
\qquad
\text{for every $a\in A$ and $b\in V(G)\sm A$}.
\]
Then $e(A)=3k-6=3n-21$.
Moreover,
\[
e(A,\overline A)=
\begin{cases}
2p-5,
 &n\equiv0,3\pmod4\text{ and degree $7$ is unique},\\
2p-4,
 &\text{otherwise}.
\end{cases}
\]
Thus $G[A]$ is a triangulation.  In the second line, $B_A$ is connected and
all its facial boundary walks have length four.  In the first line, $B_A$ is
connected, exactly one facial boundary walk has length six, and all the
others have length four.
\end{cor}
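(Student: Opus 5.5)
The plan is to identify $A$ explicitly using \Cref{thm:Pn-fixed-high-degree}, compute the defect $D_A$ from \Cref{cor:small-defect}, and read off the structure. By \Cref{thm:Pn-fixed-high-degree}, the degrees $8,\ldots,n$ each occur exactly once and no degree exceeds $n$. Degree $7$ occurs once, or, when $n\equiv0,3\pmod4$, possibly twice. All remaining vertices have degree at most $6$, and degree $6$ occurs at least once.

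We first determine the degree multiset of $A$, which has $k=n-5$ vertices. The $n-6$ vertices of degree $7,\ldots,n$ (choosing one vertex of degree $7$) must lie in $A$. If some other vertex had degree at least that of a vertex outside $A$, the ordering condition would force it in instead. There are two cases.

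\begin{enumerate*}[label=(\alph*)]
\item If degree $7$ is unique, then $A$ consists of $v_7,\ldots,v_n$ together with one vertex of degree $6$. Its degree sum is $L$, so $D_A=2p+6k-16-L=\varepsilon_n$.
\item If $n\equiv0,3\pmod4$ and degree $7$ occurs twice, then $A$ consists of both degree-$7$ vertices and $v_8,\ldots,v_n$. Its degree sum is $L+1$, so $D_A=\varepsilon_n-1=0$.
\end{enumerate*}

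Here $2p+6k-16=L+\varepsilon_n$ is the identity from the proof of \Cref{thm:Pn-fixed-high-degree}, with $L=\sum_{j=6}^n j$.

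The conclusions then follow from \Cref{cor:small-defect}.
\begin{enumerate*}[label=(\roman*)]
\item When $D_A=0$, part (i) applies, giving $e(A)=3k-6$, $e(A,\overline A)=2p-4$, and quadrangular faces of $B_A$. This covers $n\equiv1,2\pmod4$, and also $n\equiv0,3\pmod4$ when degree $7$ is repeated.
\item When $D_A=1$, which happens exactly for $n\equiv0,3\pmod4$ with degree $7$ unique, we have $2a+b=1$ with $a,b\ge0$. This forces $a=0$ and $b=1$, so $e(A)=3k-6$ and $e(A,\overline A)=2p-5$, and part (ii) gives the single face of length six.
\end{enumerate*}

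Finally, $3k-6=3n-21$, and $e(A)=3|A|-6$ means $G[A]$ is a maximal planar graph, that is, a triangulation, using $k\ge9\ge3$.

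The only delicate step is the identification of the degree multiset of $A$. We must check that the ordering hypothesis really forces membership as claimed: every vertex of degree at least $7$ lies in $A$, and the remaining slot in case (a) is filled by a vertex of degree exactly $6$. This relies on degree $6$ being present, and on no vertex outside $R$ contributing extra degree beyond what the $\varepsilon_n$ bound of \Cref{thm:Pn-fixed-high-degree} allows. The rest is arithmetic already carried out in the earlier proofs.
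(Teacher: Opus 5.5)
Your proposal is correct and follows the paper's proof essentially step for step. You identify the degree multiset of $A$ via \Cref{thm:Pn-fixed-high-degree}, compute $D_A\in\{0,1\}$ from the identity $2p+6k-16=L+\varepsilon_n$, and read off $e(A)$, $e(A,\overline A)$ and the facial structure of $B_A$ from \Cref{cor:small-defect}. Your check that the ordering hypothesis forces every vertex of degree at least $7$ into $A$, and a degree-$6$ vertex into the last slot when degree $7$ is unique, makes explicit a step the paper only asserts.
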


\begin{proof}
If $n\equiv1,2\pmod4$, the degree multiset on $A$ is $\{6,7,8,\ldots,n\}$.
Its sum is $2p+6k-16$, so the defect $D_A$ is zero.
Suppose that $n\equiv0,3\pmod4$.  If degree $7$ is unique, the degree multiset
on $A$ is again $\{6,7,8,\ldots,n\}$, whose sum is now one less than
$2p+6k-16$; hence $D_A=1$.  If degree $7$ occurs twice, the degree multiset on
$A$ is
\[
\{7,7,8,\ldots,n\},
\]
and $D_A=0$.
In all cases $D_A\in\{0,1\}$.  Since the induced-edge defect in
\Cref{lem:planar-defect} has coefficient two, it must vanish.  Therefore
$e(A)=3k-6$.  The stated cut sizes and facial descriptions now follow from
~\Cref{cor:small-defect}.
\end{proof}

\section{Polyhedral graphs with polyhedral complements}

For the next theorem, $uv$ denotes the unordered edge $\{u,v\}$.

\begin{thm}\label{thm:exact-complement-aut}
Let $G_{12}\cong g_{14.8.12}$, $G_{13}\cong g_{14.8.13}$, and $G_{39}\cong g_{14.8.39}$ be the three graphs of \Cref{lem:comp-class}.  They admit the following labelled edge sets:
\[
\begin{aligned}
E(G_{12})=\{&
ab,bc,cd,da,ax,dx,xy,ay,yc,yz,zb,zw,bw,cw\},\\[1mm]
E(G_{13})=\{&
ab,bc,cd,da,uv,ux,xy,vy,au,ax,bv,by,dx,cy\},\\[1mm]
E(G_{39})=\{&
ab,bc,cd,da,at,tb,ts,sx,sy,xy,ax,dx,by,cy\}.
\end{aligned}
\]
Then $\Aut(G_{12})
=
\left\langle
(a\,b)(c\,y)(d\,z)(x\,w)
\right\rangle
\cong C_2$, and  $\Aut(G_{13})
=
\left\langle
(c\,v)(d\,u),
(a\,b)(c\,d)(u\,v)(x\,y),
(a\,x)(b\,y)
\right\rangle
\cong C_2^3$,
and
$\Aut(G_{39})
=
\langle\alpha,\beta\rangle
\cong\Dih_4$,
where $\alpha=(a\,b\,y\,x)(c\,s\,d\,t)$, $\beta=(a\,b)(c\,d)(x\,y)$.
\end{thm}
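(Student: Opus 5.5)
The plan is to treat each of the three graphs separately by the same two-step scheme: first check directly that the listed permutations are automorphisms, then bound $|\Aut(G)|$ from above by an orbit--stabilizer count, and conclude by comparing orders. The upper bound rests on the observation behind \Cref{lem:fixed-star}: in a $3$-connected planar graph, an automorphism fixing a vertex $v$ and three of its neighbours is the identity, by Whitney uniqueness and \Cref{lem:flag-rigidity}.

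\emph{Step 1: the generators are automorphisms.} For each listed permutation, apply it to every edge in the given edge list and check that the list is mapped to itself. These are finite checks on $14$ edges. I would also record the relations: in $G_{13}$ the three involutions commute, giving $C_2^3$; in $G_{39}$, $\alpha$ has order $4$ and $\beta\alpha\beta=\alpha^{-1}$, giving $\Dih_4$. This yields the lower bounds $2$, $8$ and $8$ on the group orders.

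\emph{Step 2: degree partition and invariant structure.} In each graph the degree-$4$ vertices and the degree-$3$ vertices are preserved setwise.
\begin{itemize}
\item In $G_{13}$ the degree-$4$ vertices $a,b,x,y$ induce the $4$-cycle $abyx$, and the degree-$3$ vertices induce the matching $\{cd,uv\}$.
\item In $G_{39}$ the degree-$4$ vertices $a,b,x,y$ again induce a $4$-cycle, and the degree-$3$ vertices $c,d,t,s$ also induce a $4$-cycle $c\,d\,?\,$, checked from the edge list.
\item In $G_{12}$ the degree-$4$ vertices $a,b,c,y$ induce the $4$-cycle $abcy$.
\end{itemize}
For $G_{12}$ I would refine further with local invariants: for each degree-$4$ vertex, the number of degree-$3$ neighbours and the number of triangles through it. For example, $a$ lies on the triangles $adx$ and $axy$, while $c$ has the degree-$3$ neighbours $d$ and $w$. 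These invariants should split $\{a,b,c,y\}$ into the two orbits $\{a,b\}$ and $\{c,y\}$.

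\emph{Step 3: orbit--stabilizer bound.}
\begin{itemize}
\item For $G_{13}$ and $G_{39}$, the orbit of $a$ has size at most $4$. An automorphism fixing $a$ must preserve the two degree-$4$ neighbours $\{b,x\}$ and the two degree-$3$ neighbours of $a$. Once these neighbours are also fixed, the automorphism is the identity by the rigidity observation. I would then show that the stabilizer has order at most $2$, using the structure of the degree-$3$ layer to tie the choice on $\{b,x\}$ to the choice on the degree-$3$ neighbours. This gives $|\Aut|\le 8$.
\item For $G_{12}$, the refined invariants give an orbit of $a$ of size at most $2$. I would then show the stabilizer of $a$ is trivial: it must fix $b$, its unique degree-$4$ neighbour of its type, and $y$, and then the distinct triangle data around $a$ pins down $d$ and $x$. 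This gives $|\Aut|\le 2$.
\end{itemize}

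The main obstacle is Step 3 for $G_{12}$ and for the stabilizers in $G_{39}$. There one must find invariants strong enough to rule out extra symmetries such as a swap of $a$ and $c$. I would first try triangle counts per vertex and per edge, and fall back on a short case analysis on the image of the facial cycle $abcd$ if those counts fail to separate the vertices.
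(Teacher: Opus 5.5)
Your overall scheme matches the paper's: verify the listed generators, then bound $|\Aut(G)|$ by the size of the orbit of $a$ times the order of its stabilizer. For $G_{12}$ your argument is complete. The triangle counts $2,2,1,1$ at $a,b,c,y$ encode the same invariant the paper uses, namely that only for $a$ and $b$ are the two degree-$3$ neighbours adjacent. The stabilizer of $a$ is then trivial. The gap is the stabilizer bound for $G_{13}$ and $G_{39}$, which is the only nontrivial part of the upper bound. You leave it as ``I would then show'', and the tool you name for it would not do the job.

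First, your Step~2 claim about $G_{39}$ is false. The only edges among $c,d,t,s$ are $cd$ and $ts$, so the degree-$3$ layer is a perfect matching, exactly as in $G_{13}$. This is forced: complementation exchanges the two degree classes and sends the $4$-cycle on $a,b,x,y$ to $2K_2$. A perfect matching on the degree-$3$ layer gives no way to tell $b$ from $x$ once $a$ is fixed. What does is the pattern of common neighbours across the two layers, which is what the paper uses.
\begin{itemize}
\item In $G_{13}$, $x$ is the unique common neighbour of $a$ with $u$ (and with $d$), while $a$ and $b$ have no common neighbour. Hence every automorphism fixing $a$ fixes $b,x,y$. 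The only remaining freedom is $d\leftrightarrow u$, which forces $c\leftrightarrow v$ and is realized by $r=(c\,v)(d\,u)$. So in $G_{13}$ there is no ``tie'' to find: an argument that swapping $d,u$ forces swapping $b,x$ would be proving something false.
\item In $G_{39}$, $b$ is the unique common neighbour of $a$ and $t$, and $x$ is the unique common neighbour of $a$ and $d$. So the image of $t$ does determine the image of $b$. The nontrivial stabilizer element is $(b\,x)(c\,s)(d\,t)\in\langle\alpha,\beta\rangle$.
\end{itemize}
With these facts inserted, your rigidity step (an automorphism fixing $a$ and three of its neighbours is the identity) finishes the argument. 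It gives $|\Aut(G)|\le 4\cdot 2=8$ in both cases.
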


\begin{proof}
The displayed permutations preserve the corresponding edge lists, so they
generate subgroups of the claimed automorphism groups.

For $G_{12}$, the degree-four vertices are $a,b,c,y$.  Among them, $a$ and
$b$ are exactly those whose two degree-three neighbours are adjacent.  Thus
$a$ has at most two possible images.  Once the image of $a$ is chosen, the
induced cycle on the degree-four vertices and the adjacency pattern to the
degree-three vertices force every other image.  Hence
$|\Aut(G_{12})|\le2$, and the listed involution gives equality.

For $G_{13}$, the degree-four vertices are $a,b,x,y$.  The image of $a$ has
at most four choices.  Once the image of the degree-three neighbour
$u\in N(a)$ is chosen, there are at most two choices, and the remaining
images are forced: $x$ is the unique common neighbour of $a$ and $u$, the
other degree-three neighbour of $a$ is then forced, and the rest follows from
the edge list.  Thus $|\Aut(G_{13})|\le8$.  The three listed commuting
involutions generate a subgroup of order eight, proving
$\Aut(G_{13})\cong C_2^3$.

For $G_{39}$, the degree-four vertices are again $a,b,x,y$.  The image of
$a$ has at most four choices, and after choosing the image of the
degree-three neighbour $t\in N(a)$ there are at most two choices.  The rest
is forced: $b$ is the unique common neighbour of $a$ and $t$, and then the
remaining vertices follow successively from the edge list.  Hence
$|\Aut(G_{39})|\le8$.  The displayed permutations satisfy
\[
\alpha^4=\beta^2=1,
\qquad
\beta\alpha\beta=\alpha^{-1},
\]
and generate a subgroup of order eight.  Therefore
$\Aut(G_{39})\cong\Dih_4$.
\end{proof}

\begin{cor}\label{cor:no-asymmetric-complement-pair}
If $G$ and $\overline G$ are both polyhedral, then $\Aut(G)\ne1$.  More
precisely,
\[
\Aut(G)\in\{C_2,C_2^3,\Dih_4\}.
\]
Thus no polyhedral graph with polyhedral complement is asymmetric.
\end{cor}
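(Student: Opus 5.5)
This corollary follows from the classification of \Cref{lem:comp-class} together with the explicit computations in \Cref{thm:exact-complement-aut}; no new structural argument is needed. The plan is as follows.

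First, suppose $G$ and $\overline G$ are both polyhedral. By \Cref{lem:comp-class}, $G$ is isomorphic to one of the three graphs $g_{14.8.12}$, $g_{14.8.13}$, $g_{14.8.39}$. Since isomorphic graphs have isomorphic automorphism groups, $\Aut(G)$ is isomorphic to one of $\Aut(G_{12})$, $\Aut(G_{13})$, $\Aut(G_{39})$. By \Cref{thm:exact-complement-aut} these are $C_2$, $C_2^3$ and $\Dih_4$, respectively, which gives the displayed membership. Each of these groups has order at least two, so $\Aut(G)\neq1$.

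The only point that needs care is the identification step. The labelled edge sets in \Cref{thm:exact-complement-aut} must indeed realise the three graphs named in \Cref{lem:comp-class}; this is asserted in the statement of that theorem, so we take it as given.

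One could add a sanity check that does not depend on the lists. Because each of the three graphs is self-complementary, $\overline G\cong G$. Hence $\Aut(\overline G)=\Aut(G)$ is also one of the three groups, and this is consistent with the corollary being symmetric in $G$ and $\overline G$.

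For nontriviality alone there is a more robust route that avoids relying on the full computation. It suffices to exhibit one nonidentity automorphism for each graph, for instance $(a\,b)(c\,y)(d\,z)(x\,w)$ for $G_{12}$, $(a\,x)(b\,y)$ for $G_{13}$, and $\beta$ for $G_{39}$. Each of these can be checked directly against the edge lists.

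I expect no real obstacle here. The substantive work, namely the upper bounds $|\Aut|\le 2$ and $|\Aut|\le 8$, is already contained in \Cref{thm:exact-complement-aut}.
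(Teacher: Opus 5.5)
Your proposal is correct and is exactly the paper's argument: the corollary follows from the classification in \Cref{lem:comp-class} combined with the explicit groups computed in \Cref{thm:exact-complement-aut}. Your additional shortcut for nontriviality alone is also valid. The three involutions you name do preserve the respective edge lists.
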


\begin{proof}
This is immediate from \Cref{lem:comp-class} and
~\Cref{thm:exact-complement-aut}.
\end{proof}

\subsection{Extended automorphism groups}

\begin{defn}
For a graph $G$ on vertex set $V$, define
\[
\Autpm(G):=
\left\{
\sigma\in\Sym(V):
\sigma(E(G))=E(G)
\text{ or }
\sigma(E(G))=\binom{V}{2}\sm E(G)
\right\}.
\]
Thus $\Aut(G)$ is an index-two subgroup of $\Autpm(G)$ whenever $G$ is
self-complementary.  Elements of $\Autpm(G)\sm\Aut(G)$ are called
complementing permutations.
\end{defn}

\begin{thm}\label{thm:autpm-three}
For the three graphs in ~\Cref{thm:exact-complement-aut}, the following
hold.

\begin{enumerate}[label=(\roman*)]
\item $\Autpm(G_{12})
=
\langle\theta_{12}\rangle
\cong C_4$, $\theta_{12}:=(a\,d\,b\,z)(c\,x\,y\,w)$.

\item We set $r:=(c\,v)(d\,u)$, $s:=(a\,x)(b\,y)$,
$t:=\theta_{13}:=(a\,c\,b\,u)(d\,x\,v\,y)$.
Then
\[
\Autpm(G_{13})
=
\langle r,s,t\rangle
\cong
(C_2\times C_2)\rtimes C_4,
\]
where the generator of $C_4$ interchanges the two standard generators of
$C_2\times C_2$.  
More explicitly,
\[
\Autpm(G_{13})
\cong
\left\langle
r,s,t:
 r^2=s^2=t^4=1,
 [r,s]=1,
 trt^{-1}=s,
 tst^{-1}=r
\right\rangle.
\]
Furthermore, $\Aut(G_{13})=\langle r,s,t^2\rangle\cong C_2^3$,
$Z\bigl(\Autpm(G_{13})\bigr)
=
\langle rs,t^2\rangle
\cong C_2^2,$
$\Autpm(G_{13})'
=
\langle rs\rangle
\cong C_2.
$
The group has one identity element, seven involutions, and eight elements of
order four.  All eight complementing permutations have order four.  The
index-two extension
\[
1\longrightarrow\Aut(G_{13})
\longrightarrow\Autpm(G_{13})
\longrightarrow C_2
\longrightarrow1
\]
is nonsplit.

\item
$\Autpm(G_{39})
=
\langle\gamma,\beta\rangle
\cong\SD_{16}$,
where $\gamma:=(a\,c\,b\,s\,y\,d\,x\,t)$,
$\beta=(a\,b)(c\,d)(x\,y)$,
and $\SD_{16}
=
\langle R,S:R^8=S^2=1,\ SRS=R^3\rangle$
is the semidihedral group of order $16$.
\end{enumerate}
\end{thm}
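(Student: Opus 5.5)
The argument rests on one structural fact. Each $G$ is self-complementary, so $\Aut(G)$ has index two in $\Autpm(G)$. Hence $|\Autpm(G)|=2|\Aut(G)|$, and this equals $4$, $16$, $16$ respectively by \Cref{thm:exact-complement-aut}. It therefore suffices, for each graph, to exhibit the displayed complementing permutation, check that it is complementing, and identify the group structure.

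First I will verify that each displayed permutation ($\theta_{12}$, $t=\theta_{13}$, $\gamma$) maps $E(G)$ onto the complementary edge set. The complement of an $8$-vertex graph with $14$ edges has $28-14=14$ edges, so it is enough to check that the image of every listed edge is a non-edge of $G$. This is a finite check of $14$ edges per graph.

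The group structure then follows graph by graph.
\begin{enumerate}[label=(\roman*)]
\item For $G_{12}$: once $\theta_{12}$ is complementing, it generates a subgroup containing $\theta_{12}^2$. This element is an automorphism, so it must equal $(a\,b)(c\,y)(d\,z)(x\,w)$, which a direct computation confirms. Since the group has order $4$, it equals $\langle\theta_{12}\rangle\cong C_4$.
\item For $G_{39}$: $\gamma$ is an $8$-cycle, so $\langle\gamma\rangle$ has index two in a group of order $16$. Together with $\beta\in\Aut(G_{39})\setminus\langle\gamma\rangle$, the pair generates everything. Computing $\beta\gamma\beta$ and comparing it with $\gamma^3$ (or, if needed, with another power) identifies the group among the four order-$16$ groups containing a cyclic subgroup of order $8$. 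The relation $SRS=R^3$ gives $\SD_{16}$.
\item For $G_{13}$: I will verify $t^2=(a\,b)(c\,u)(d\,v)(x\,y)$ by composing. This lies in $\Aut(G_{13})$, and in fact equals the product of the old generators $(a\,b)(c\,d)(u\,v)(x\,y)$ and $(c\,v)(d\,u)$, or some such combination. Hence $\Aut(G_{13})=\langle r,s,t^2\rangle$. Next I will compute $trt^{-1}$ and $tst^{-1}$ by conjugating cycle labels, and confirm that they equal $s$ and $r$.
\end{enumerate}

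The presentation defines a group of order at most $16$: $N=\langle r,s\rangle$ is normal of order $4$, and the quotient is generated by the image of $t$, which has order $4$. Concretely, $t$ acts on $N\cong C_2^2$ by swapping $r$ and $s$, and this action has order $2$. So the presented group is $(C_2\times C_2)\rtimes C_4$ with kernel action by $t^2$, of order exactly $16$. It therefore matches the realised group.

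The remaining invariants are computed from normal forms $r^is^jt^k$.
\begin{itemize}
\item \emph{Center.} An element is central iff it commutes with $t$ and with $r,s$. Commuting with $r,s$ forces $k$ even, since odd powers of $t$ swap $r$ and $s$. Commuting with $t$ then forces $i=j$. This gives $\langle rs,t^2\rangle$.
\item \emph{Commutator subgroup.} The only nontrivial commutator generated is $[t,r]=sr$, so $\Autpm(G_{13})'=\langle rs\rangle$.
\item \emph{Element orders.} If $k$ is even, $g=r^is^jt^k$ lies in the abelian group $C_2^3$, giving the identity and seven involutions. If $k$ is odd, $(r^is^jt)^2=r^is^j\,s^ir^j\,t^2=(rs)^{i+j}t^2\neq1$ because $t^2\notin N$. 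So all eight elements in the coset $\Autpm\setminus\Aut$ have order $4$, and these are exactly the complementing permutations.
\item \emph{Nonsplitting.} A splitting would need an involution outside $\Aut(G_{13})$, and there is none.
\end{itemize}

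The main obstacle is the bookkeeping in (ii): getting the composition conventions right so that $t^2$ and the conjugates $trt^{-1}$, $tst^{-1}$ come out exactly as claimed. If $trt^{-1}$ turns out to be $s$ only up to a central factor, I would instead replace $s$ by the appropriate product and adjust accordingly.
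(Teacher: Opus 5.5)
Your proposal is correct and follows the paper's proof essentially step for step. The shared steps are: index two from self-complementarity; a direct check that $\theta_{12}$, $t$, $\gamma$ are complementing; $\theta_{12}^2=(a\,b)(c\,y)(d\,z)(x\,w)$; $t^2=rq$ with $q=(a\,b)(c\,d)(u\,v)(x\,y)$, together with $trt^{-1}=s$ and $tst^{-1}=r$; and $\beta\gamma\beta=\gamma^3$ giving $\SD_{16}$. These relations hold exactly, so your fallback of adjusting by a central factor is never needed.

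Two cosmetic points. The elements $r^is^jt^3$ need the same squaring computation as $r^is^jt$; it gives $(rs)^{i+j}t^2$ again. Also, there are six groups of order $16$ with a cyclic subgroup of order $8$ (four of them non-abelian), but your argument never uses that count.
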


\begin{proof}
A direct check against the edge lists in
~\Cref{thm:exact-complement-aut} shows that
$\theta_{12}$, $t$, and $\gamma$ send edges to nonedges and nonedges to
edges.  Since all three graphs are self-complementary by
\Cref{lem:comp-class}, $|\Autpm(G_i)|=2|\Aut(G_i)|$ for 
$i\in\{12,13,39\}$.
For $G_{12}$, we have $\theta_{12}^2=(a\,b)(c\,y)(d\,z)(x\,w)$,
the nonidentity automorphism of $G_{12}$.  
Thus $\Autpm(G_{12})\cong C_4$.
For $G_{13}$, let $q:=(a\,b)(c\,d)(u\,v)(x\,y)$,
the second generator of $\Aut(G_{13})$ in
~\Cref{thm:exact-complement-aut}.  
Direct multiplication gives
\[
t^2=rq,
\qquad
trt^{-1}=s,
\qquad
tst^{-1}=r.
\]
Hence $q=rt^2$, so $\langle r,s,t\rangle$ contains
$\Aut(G_{13})$ and a complementing permutation.  It therefore equals
$\Autpm(G_{13})$ and has order $16$.

The subgroup $V:=\langle r,s\rangle$ is a normal Klein four-group,
$\langle t\rangle\cong C_4$, and the order computation gives
$V\cap\langle t\rangle=1$.  This proves the asserted semidirect-product
description and presentation.  Since $t^2$ is central,
\[
\Aut(G_{13})=V\times\langle t^2\rangle\cong C_2^3.
\]
The conjugation action interchanges $r$ and $s$, from which
\[
Z\bigl(\Autpm(G_{13})\bigr)=\langle rs,t^2\rangle
\quad\text{and}\quad
\Autpm(G_{13})'=\langle rs\rangle
\]
follow.  
The complementing coset is
$Vt\cup Vt^3$.
For $v\in V$, $(vt)^2=v(tvt^{-1})t^2$,
which is either $t^2$ or $rst^2$, and is always a nonidentity involution.
The same holds for $vt^3$.  Hence all eight complementing elements have
order four.  The remaining seven nonidentity elements of
$\Aut(G_{13})\cong C_2^3$ are involutions.  Since the complementing coset
contains no involution, the displayed index-two extension cannot split.

For $G_{39}$, the permutation $\gamma$ has order eight, and
\[
\beta^2=1,
\qquad
\beta\gamma\beta=\gamma^3.
\]
Thus $\langle\gamma,\beta\rangle$ is a quotient of $\SD_{16}$.  It contains
the eight powers of $\gamma$ and also
$\beta\notin\langle\gamma\rangle$, so it has order $16$.  Hence it equals
$\Autpm(G_{39})$ and is isomorphic to $\SD_{16}$.
\end{proof}

\begin{cor}\label{cor:complementer-inventory}
Define
\[
P_G(z):=
\sum_{\theta\in\Autpm(G)\sm\Aut(G)}
z^{\operatorname{ord}(\theta)}.
\]
For the three complement-polyhedral graphs,
\[
P_{G_{12}}(z)=2z^4,
\qquad
P_{G_{13}}(z)=8z^4,
\qquad
P_{G_{39}}(z)=4z^4+4z^8.
\]
Consequently, these inventories, unlike the sets of possible orders alone,
distinguish all three graphs.  More precisely, the complementing cycle types
are
\[
\begin{array}{c|c}
G&\text{complementing cycle types}\\ \hline
G_{12}&2\text{ permutations of type }(4,4),\\
G_{13}&8\text{ permutations of type }(4,4),\\
G_{39}&4\text{ permutations of type }(4,4)
       \text{ and }4\text{ of type }(8).
\end{array}
\]
\end{cor}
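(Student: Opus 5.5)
We compute the complementing coset of each extended group from the explicit descriptions in \Cref{thm:autpm-three}, and record the order and cycle type of every element. In each case the coset is $\Autpm(G)\sm\Aut(G)=\Aut(G)\theta$ for the displayed complementing permutation $\theta$. Since every permutation here acts on $8$ points and (as we shall check) is fixed-point-free, the cycle types $(4,4)$ and $(8)$ can be read off from the orders.

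For $G_{12}$, $\Autpm(G_{12})=\langle\theta_{12}\rangle\cong C_4$. The complementing elements are $\theta_{12}$ and $\theta_{12}^3$, both of order $4$. The cycle type of $\theta_{12}$ is visibly $(4,4)$, and $\theta_{12}^{3}=\theta_{12}^{-1}$ has the same type. This gives $P_{G_{12}}(z)=2z^4$.

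For $G_{13}$, \Cref{thm:autpm-three}(ii) already shows that all eight complementing elements have order $4$, so $P_{G_{13}}(z)=8z^4$. For the cycle types, each such element $\theta$ has $\theta^2\in\{t^2,rst^2\}$. We compute $t^2$ and $rst^2$ as permutations and check that both are fixed-point-free products of four transpositions. Then $\theta$ has no fixed points and no $2$-cycles, and since it has order $4$ its type is $(4,4)$.

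For $G_{39}$, \Cref{thm:autpm-three}(iii) gives $\Autpm(G_{39})\cong\SD_{16}$ with $\Aut(G_{39})=\langle\gamma^2,\beta\rangle\cong\Dih_4$. The complementing coset is $\{\gamma^{\mathrm{odd}}\}\cup\{\beta\gamma^{\mathrm{odd}}\}$. The four odd powers of the $8$-cycle $\gamma$ are $8$-cycles, giving $4z^8$. For the other four elements we use the $\SD_{16}$ relation $S R^{i}S=R^{3i}$, which gives
\[
(\beta\gamma^i)^2=\gamma^{3i}\gamma^{i}=\gamma^{4i}.
\]
For odd $i$ this equals $\gamma^4$, a fixed-point-free product of four transpositions. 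Hence each $\beta\gamma^i$ has order $4$ and type $(4,4)$, giving $4z^4$.

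Distinguishing the three graphs is then immediate. The set of complementing orders is $\{4\}$ for both $G_{12}$ and $G_{13}$, but their inventories $2z^4$ and $8z^4$ differ, and $G_{39}$ is the only one with a $z^8$ term. The step needing genuine care is the cycle-type computation for $G_{13}$: it relies on explicitly multiplying out $t^2$ and $rst^2$ and confirming that neither fixes a point. If that check were to fail, we would instead compute all eight products $vt$ and $vt^3$ directly.
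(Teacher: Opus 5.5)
Your proposal is correct and follows essentially the paper's argument: you read off the complementing coset from \Cref{thm:autpm-three}, and for $G_{39}$ you use $\beta\gamma\beta=\gamma^3$ to get $(\beta\gamma^{i})^2=\gamma^4$ for odd $i$ (your set $\{\beta\gamma^{\mathrm{odd}}\}$ coincides with the paper's $\{\gamma^{2i+1}\beta\}$). Your explicit check that $t^2=(a\,b)(c\,u)(d\,v)(x\,y)$ and $rst^2=(a\,y)(b\,x)(c\,d)(u\,v)$ are fixed-point-free, so that every $\theta$ with $\theta^2\in\{t^2,rst^2\}$ has type $(4,4)$, fills in the ``direct multiplication'' that the paper leaves implicit; note, however, that your opening sentence is inaccurate, since fixed-point-freeness of $\theta$ alone does not force type $(4,4)$ (type $(4,2,2)$ is also fixed-point-free of order four), and what does force it is this condition on $\theta^2$.
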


\begin{proof}
The formulas for $G_{12}$ and $G_{13}$ follow from
~\Cref{thm:autpm-three}.  For $G_{39}$,
\[
\alpha=\gamma^2
\qquad\text{and}\qquad
\Aut(G_{39})=\langle\gamma^2,\beta\rangle.
\]
The complementing coset consists of the four odd powers of $\gamma$, all of
order eight, and the four elements $\gamma^{2i+1}\beta$.  Since
$\beta\gamma\beta=\gamma^3$, $\gamma^{2i+1}\beta)^2=\gamma^4$,
so these four elements have order four.  The cycle types follow by direct
multiplication from the displayed permutations: the odd powers of $\gamma$
are $8$-cycles, while every remaining complementer is a product of two
$4$-cycles.
\end{proof}

\section{Automorphism groups of radius-one polyhedra}

The radius of a graph is one if and only if it has a dominating vertex.  A
polyhedron of radius one with at least six vertices either has exactly one
dominating vertex or is isomorphic to $P_{n-2}+K_2$
\cite[Lemma 4.1]{maffucci2025classification}.  If $u$ is the unique
dominating vertex, then $G-u$ is $2$-connected and outerplanar; equivalently,
it is outerplanar and Hamiltonian
\cite[Lemma 3.3]{maffucci2025regularity}.

\begin{thm}
\label{thm:radius-one}
Let $G$ be a polyhedron with exactly one dominating vertex.  Then
\[
\Aut(G)\cong C_m\quad(m\ge1),
\qquad\text{or}\qquad
\Aut(G)\cong\Dih_m\quad(m\ge2).
\]
Conversely, every finite cyclic group and every dihedral group $\Dih_m$ with
$m\ge2$ occurs as the automorphism group of such a polyhedron.
\end{thm}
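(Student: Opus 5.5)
Since $u$ is the unique dominating vertex, every automorphism of $G$ fixes $u$. Hence $\Aut(G)$ embeds in $\Aut(H)$, where $H:=G-u$. Conversely, every automorphism of $H$ extends to $G$ by fixing $u$, so $\Aut(G)\cong\Aut(H)$. By \cite[Lemma 3.3]{maffucci2025regularity}, $H$ is a $2$-connected outerplanar graph, so it has a unique Hamiltonian cycle $C$: the boundary of the outer face. I would justify uniqueness as follows. In a $2$-connected outerplanar graph every chord separates, so any Hamiltonian cycle must use only outer edges; alternatively, this follows from uniqueness of the outerplane embedding. Every automorphism of $H$ maps $C$ to itself, which gives an injective homomorphism $\Aut(H)\hookrightarrow\Aut(C)\cong\Dih_{|C|}$. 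Thus $\Aut(G)$ is isomorphic to a subgroup of a dihedral group, and every such subgroup is cyclic or dihedral. This gives $C_m$ or $\Dih_m$. The group $\Dih_1\cong C_2$ is already counted as cyclic, so we may take $m\ge2$ in the dihedral case. The step needing the most care is the injectivity into $\Aut(C)$. It holds because an automorphism fixing $C$ pointwise fixes every vertex of $H$, since $C$ is spanning.

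For the converse I would give explicit wheel-based constructions. The plan is to start from the wheel $W_N=K_1+C_N$ and add a symmetric pattern of chords to the rim. Adding noncrossing chords keeps $H$ outerplanar and $2$-connected. It also keeps $G=K_1+H$ polyhedral: $G$ is planar with $u$ in the outer face of $H$, and it is $3$-connected because $H$ is $2$-connected.

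The dihedral case $\Dih_m$ with $m\ge3$ is already realised by the wheel $W_m$ itself, provided $m\ge4$ so that the hub is the unique dominating vertex. For $m=3$, I would take $C_6$ with chords $v_0v_2,v_2v_4,v_4v_0$. Its symmetry group is $\Dih_3$, and the rim vertices have degree $3$ or $4$ in $H$, so $u$ is the unique dominating vertex. For $\Dih_2$, I would take $C_4$ plus one chord $v_0v_2$, giving $C_2\times C_2=\Dih_2$. Here $u$ is the unique vertex adjacent to everything, because the rim vertices $v_1,v_3$ are not adjacent.

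For the cyclic groups I would use chiral chord patterns. Take $C_{4m}$ with vertices $v_0,\dots,v_{4m-1}$, and add the chords $v_{4i}v_{4i+2}$ and $v_{4i}v_{4i+3}$ for each $i$. These chords are noncrossing. The rotation by $4$ preserves them, but every reflection fails to, because the two chords at $v_{4i}$ point in one rotational direction only. I expect the main obstacle to be verifying carefully that no reflection survives, and choosing a chiral pattern that works uniformly for all $m$. I would do this by comparing degree sequences read around $C$ in each direction: they are $(4,2,3,3)$ repeated in one direction, and its reversal is not a cyclic shift.

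Small cases need separate checks. For $m=1$, I would use an explicit asymmetric example on a rim with chords in a chiral fan, for instance $C_7$ with chords chosen so that the degree pattern around the rim is not palindromic or periodic. For $m=2$, the general construction gives a rim of $8$ vertices. In every case I would also check that $u$ remains the unique dominating vertex, which holds because every rim vertex has small degree.
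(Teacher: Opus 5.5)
Your proof of the first assertion is the same as the paper's: you fix $u$, pass to $H=G-u$, and use the unique Hamilton cycle to embed $\Aut(H)$ in $\Dih_{|C|}$. Your realisations for $\Dih_m$ ($m\ge4$, via the wheels $W_m$), for $\Dih_3$, and for $C_m$ ($m\ge2$, via the rim pattern $(4,2,3,3)$) are valid alternatives to the paper's examples.

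The genuine error is your $\Dih_2$ example. With $H=C_4+v_0v_2$, the graph $G=K_1+H$ is $K_5-e$, the triangular bipyramid. In $G$, the vertex $v_0$ is adjacent to $v_1$ and $v_3$ along the rim, to $v_2$ by the chord, and to $u$, so it is adjacent to every other vertex; the same holds for $v_2$. Thus $G$ has three dominating vertices $u,v_0,v_2$ and lies outside the hypothesis of the theorem. Indeed $\Aut(G)\cong S_3\times C_2$ has order $12$: automorphisms permute $u,v_0,v_2$, so $u$ is not fixed and your isomorphism $\Aut(G)\cong\Aut(H)$ fails. Your check that ``$v_1,v_3$ are not adjacent'' only excludes $v_1$ and $v_3$.

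The criterion you need is the one the paper states: $u$ is the unique dominating vertex of $K_1+H$ exactly when $H$ itself has no dominating vertex. Every $4$-vertex rim with a chord violates this. Replace the example by, e.g., $C_6$ with the diameter chord $v_0v_3$. There every rim vertex has degree at most $4<6$ in $G$. The hexagon symmetries preserving $v_0v_3$ are the identity, the half-turn, and the two reflections with axes along and perpendicular to the chord, giving $C_2\times C_2$.

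You also never actually exhibit the case $C_1$. Your block pattern degenerates at $m=1$: the chord $v_0v_3$ is a rim edge of $C_4$, so you get exactly the bad graph above. ``A chiral fan on $C_7$'' is not yet an example. A concrete choice such as $C_6$ with chords $v_0v_2,v_0v_3$ settles it. Its rim degrees are $(4,2,3,3,2,2)$, so $v_0$ is the unique degree-$4$ vertex, and the only reflection fixing $v_0$ sends the degree-$3$ vertex $v_2$ to the degree-$2$ vertex $v_4$.

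A small slip: in your $\Dih_3$ example the rim degrees in $H$ are $2$ and $4$, not $3$ and $4$. Uniqueness of the dominating vertex still holds, since rim degrees in $G$ are at most $5<6$.
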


\begin{proof}
Let $u$ be the unique dominating vertex and put $H:=G-u$.  The vertex $u$ is
fixed by every automorphism, and restriction gives $\Aut(G)\cong\Aut(H)$.
The graph $H$ is $2$-connected and outerplanar.  Its outer boundary is its
unique Hamilton cycle, so every automorphism of $H$ induces a rotation or a
reflection of that cycle.  Thus $\Aut(H)$ is a subgroup of a dihedral group.
Every subgroup of a dihedral group is cyclic or dihedral, proving the first
assertion.

We now realize all the listed groups.  If $H$ is a $2$-connected outerplanar
graph with no dominating vertex, then $K_1+H$ is planar and $3$-connected:
removing the apex and one further vertex leaves the connected graph obtained
from $H$ by deleting one vertex, while every deletion of two vertices that
leaves the apex also leaves the graph connected.  Hence $K_1+H$ is a
polyhedral graph with a unique dominating vertex, and
\[
\Aut(K_1+H)\cong\Aut(H).
\]

For $C_1$, take a $6$-cycle with the noncrossing chords $02$ and $03$.  For
$C_2$, take a $5$-cycle with the chord $02$.  Direct inspection of the unique
outer cycle gives the claimed automorphism groups.
For $m\ge3$, place
\[
a_0,b_0,c_0,a_1,b_1,c_1,\ldots,a_{m-1},b_{m-1},c_{m-1}
\]
in this cyclic order, with indices modulo $m$, and add the noncrossing chords
\[
a_i a_{i+1}
\qquad\text{and}\qquad
a_i c_i
\quad(0\le i<m).
\]
The degrees along the unique boundary cycle repeat as $5,2,3$.  Therefore the
only dihedral symmetries of the boundary preserving the graph are the $m$
rotations through whole blocks, and the automorphism group is $C_m$.

For $\Dih_2$, take a $6$-cycle with the chord $03$.  For $m\ge3$, place
\[
a_0,b_0,a_1,b_1,\ldots,a_{m-1},b_{m-1}
\]
in cyclic order and add all chords $a_i a_{i+1}$.  The vertices $a_i$ induce
an $m$-cycle when $m\ge4$ and a triangle when $m=3$; each $b_i$ is the unique
degree-two vertex adjacent to $a_i$ and $a_{i+1}$.  Hence every automorphism
is induced by a rotation or reflection of the indices, and all such
symmetries occur.  The automorphism group is $\Dih_m$.  In every construction
above, the outerplanar base has no dominating vertex.
\end{proof}

\begin{thm}
\label{thm:radius-one-triangulated}
Let $G$ be a triangulated polyhedron with exactly one dominating vertex.
Then $\Aut(G)\in
\{1,C_2,C_3,C_2\times C_2,S_3\}$.
All five groups occur.
\end{thm}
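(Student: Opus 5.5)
As in \Cref{thm:radius-one}, let $u$ be the unique dominating vertex and set $H:=G-u$. Every automorphism fixes $u$, so restriction gives $\Aut(G)\cong\Aut(H)$. Moreover $H$ is a $2$-connected outerplanar graph whose outer boundary is its unique Hamilton cycle $C$. Since $G$ is triangulated, every face of $G$ not containing $u$ is a triangle, and the faces through $u$ are the triangles $u\,c_i\,c_{i+1}$ over the edges of $C$. Hence $H$ is a \emph{maximal} outerplanar graph, that is, a triangulated polygon on $N:=|V(H)|\ge5$ vertices (if $N\le4$ then $H$ has a dominating vertex, and $G$ would have a second dominating vertex). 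By the argument of \Cref{thm:radius-one}, $\Aut(H)$ acts faithfully on $C$ as a subgroup of $\Dih_N$. The task is therefore to bound the symmetry of a triangulated polygon in which no vertex is adjacent to all the others.

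The key step is the weak dual $T$ of $H$: the tree whose nodes are the inner triangles, with two nodes adjacent when the triangles share a chord. Every node of $T$ has degree at most $3$, and $\Aut(H)$ acts on $T$. Because the action on $C$ is faithful and fixing a triangle together with an orientation fixes everything (this is the flag rigidity of \Cref{lem:flag-rigidity} applied to $G$), the stabiliser of a node of $T$ embeds in the symmetry group of that triangle, which is $S_3$. Every finite group acting on a tree fixes its center, which is either a node or an edge. If the center is a node, the fixed triangle $\Delta$ gives $\Aut(H)\le S_3$. If it is an edge, the fixed chord gives $\Aut(H)\le$ the stabiliser of an edge in a quadrilateral formed by two triangles, which is $C_2\times C_2$ (swap the ends of the chord, swap the two sides). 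It remains to list the subgroups of $S_3$ and of $C_2\times C_2$: these are $1$, $C_2$, $C_3$, $S_3$, and $C_2\times C_2$. Here $C_2\times C_2$ can arise only in the edge case.

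For existence, the apex $K_1+H$ is polyhedral and triangulated whenever $H$ is maximal outerplanar without a dominating vertex, as shown in the proof of \Cref{thm:radius-one}. I would give explicit triangulated polygons:
\begin{itemize}
\item $S_3$: start from a central triangle $c_0c_2c_4$ on a hexagon $c_0,\dots,c_5$ and add the ears $c_0c_1c_2$, $c_2c_3c_4$, $c_4c_5c_0$ (the fan-free ``sun'').
\item $C_3$: take the sun and break its reflections by attaching ears chirally. Use a $9$-gon with central triangle $c_0c_3c_6$, and triangulate each of the three quadrilaterals $c_0c_1c_2c_3$, $c_3c_4c_5c_6$, $c_6c_7c_8c_0$ using the chords $c_0c_2$, $c_3c_5$, $c_6c_8$ respectively. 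This triangulation is rotation-invariant but not reflection-invariant.
\item $C_2\times C_2$: take a hexagon with the chords $c_0c_2$, $c_0c_3$, $c_3c_5$. Indeed, $c_0c_3$ is the center chord, and the configuration is invariant under both the reflection swapping $c_0\leftrightarrow c_3$ and the point rotation by $3$.
\item $C_2$: take a suitable asymmetric extension that has one reflection.
\item $1$: take a heptagon with a generic triangulation, checked to have no dominating vertex.
\end{itemize}
I would verify each example by computing the degree sequence along $C$, as was done in \Cref{thm:radius-one}.

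The main obstacle is the edge-center case. I must check that the stabiliser of a chord really is limited to $C_2\times C_2$ and not larger, and I must double-check the hexagon example. In particular, its three chords must not create a dominating vertex in $H$: the degrees of $c_0$ and $c_3$ are $4<5$, so none is created. Correspondingly, I must check that the triangle case does not allow $C_2\times C_2$.
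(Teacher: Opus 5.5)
Your upper bound is correct and takes a genuinely different route from the paper. The paper realises the boundary cycle as a regular polygon and notes that the geometric centre lies inside a triangle or on a diameter. It uses this only to bound the rotation subgroup by $3$, then invokes the cyclic/dihedral dichotomy of Theorem~\ref{thm:radius-one}. Your centre of the weak dual tree produces an invariant triangle or chord purely combinatorially. It bounds the whole group, reflections included, by $S_3$ or $C_2\times C_2$ in one step, and it also tells you which groups can arise in which case. The gap is in the existence half.

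Your hexagon with chords $c_0c_2,c_0c_3,c_3c_5$ does not have automorphism group $C_2\times C_2$. The only reflection interchanging $c_0$ and $c_3$ is $c_i\mapsto c_{3-i}$, and it sends the chord $c_0c_2$ to $c_1c_3$, which is not a chord. Equivalently, the cyclic boundary-degree sequence $(4,2,3,4,2,3)$ is not reversal-invariant, so no reflection survives and the group is only $\langle c_i\mapsto c_{i+3}\rangle\cong C_2$. Up to relabelling, this is the paper's $C_2$ example.

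In your own tree language: the central chord is $c_0c_3$ with apexes $c_2,c_5$. An element swapping $c_0,c_3$ while fixing $c_2,c_5$ would have to be the reflection whose axis passes through $c_2$ and $c_5$, and that reflection sends $c_0$ to $c_4$. In fact no hexagon can work. In the edge-centre case, $C_2\times C_2$ requires two perpendicular reflection axes, one through the ends of the central chord and one through the two apexes, both passing through vertices. This forces $N\equiv0\pmod4$, and $N=4$ is excluded.

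A correct witness is the octagon with chords $c_0c_2,c_0c_4,c_0c_6,c_2c_4,c_4c_6$. Its boundary degrees are $(5,2,4,2,5,2,4,2)$, it is invariant under $c_i\mapsto c_{i+4}$ and $c_i\mapsto c_{-i}$, and its maximum degree is $5<7$.

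You also never actually exhibit the trivial group. For instance, take the heptagon with chords $c_0c_2,c_2c_6,c_3c_6,c_4c_6$, which has maximum degree $5<6$. Its unique vertices of degrees $5$ and $4$ must both be fixed, and no nonidentity dihedral symmetry of a $7$-cycle fixes two vertices. Your vague $C_2$ bullet can simply be replaced by your mislabelled hexagon.
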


\begin{proof}
Let $u$ be the unique dominating vertex and put $H:=G-u$.  Since $G$ is a
triangulation, $H$ is maximal outerplanar, equivalently a triangulation of a
polygon.  Realize the boundary polygon as a regular polygon.  The rotation
subgroup of $\Aut(H)$ acts by geometric rotations preserving the polygonal
triangulation.

The centre of the polygon lies either in the interior of a triangular cell
or in the relative interior of a diagonal.  The cell containing the centre
is invariant under the rotation subgroup.  In the first case the rotation
subgroup acts faithfully on the three vertices of the triangle, so its order
is at most three.  In the second case it acts faithfully on the two endpoints
of the diagonal, so its order is at most two.  Therefore the rotation
subgroup has order at most three.  Combining this with
~\Cref{thm:radius-one} leaves precisely
\[
C_1,
\quad C_2,
\quad C_3,
\quad\Dih_2,
\quad\Dih_3,
\]
where $\Dih_2\cong C_2\times C_2$ and $\Dih_3\cong S_3$.

It remains to realize the five possibilities.  In the following table, the
vertices of $H$ are numbered cyclically from $0$ to $|V(H)|-1$, and only the
diagonals are listed:
\[
\begin{array}{c|c|l}
\Aut(H)&|V(H)|&\text{diagonals}\\ \hline
1&7&02,26,36,46\\
C_2&6&14,15,24\\
C_3&9&14,17,18,24,47,57\\
C_2\times C_2&8&02,04,06,24,46\\
S_3&6&13,15,35.
\end{array}
\]
Each diagonal set is noncrossing and has $|V(H)|-3$ edges, so each $H$ is
maximal outerplanar.  Since automorphisms preserve the unique boundary cycle,
the cyclic boundary-degree sequences
\[
\begin{array}{c|c}
1&(3,2,4,3,3,2,5),\\
C_2&(2,4,3,2,4,3),\\
C_3&(2,5,3,2,5,3,2,5,3),\\
C_2\times C_2&(5,2,4,2,5,2,4,2),\\
S_3&(2,4,2,4,2,4)
\end{array}
\]
restrict the possible dihedral actions to the groups displayed in the first
column; the listed diagonals are invariant under every one of those actions.
None of these bases has a dominating vertex.  Adjoining the unique apex
therefore gives the required triangulated polyhedra.
\end{proof}

\begin{cor}\label{cor:path-join}
For every $n\ge6$, $\Aut(P_{n-2}+K_2)\cong C_2\times C_2$.
\end{cor}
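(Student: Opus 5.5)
We first fix the structure of $G:=P_{n-2}+K_2$.  Write the path as $p_1p_2\cdots p_{n-2}$ and the two joined vertices as $u,w$, with $uw\in E(G)$.  Then $u$ and $w$ are adjacent to every other vertex, so $d_G(u)=d_G(w)=n-1$.  The endpoints $p_1,p_{n-2}$ have degree $3$, and the interior path vertices have degree $4$.  Since $n\ge6$, we have $n-1\ge5>4$.  Hence $\{u,w\}$ is exactly the set of vertices of maximum degree, and every automorphism preserves it setwise.  It follows that every automorphism restricts to an automorphism of $G-\{u,w\}=P_{n-2}$.

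The key step is to show that restriction gives a surjective homomorphism $\Aut(G)\to\Sym(\{u,w\})\times\Aut(P_{n-2})$ which is also injective.  Injectivity is immediate, since an automorphism is determined by its action on $\{u,w\}$ and on the path vertices.  For surjectivity, consider any pair $(\pi,\rho)$ with $\pi\in\Sym(\{u,w\})$ and $\rho\in\Aut(P_{n-2})$.  The combined permutation preserves edges within the path, preserves the edge $uw$, and preserves all edges between $\{u,w\}$ and the path, because that bipartite part is complete.  So every such pair defines an automorphism of $G$.

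The path $P_{n-2}$ has $n-2\ge4\ge2$ vertices, so $\Aut(P_{n-2})\cong C_2$, generated by the reversal $p_i\mapsto p_{n-1-i}$.  Therefore $\Aut(G)\cong C_2\times C_2$.  The two commuting generators are the transposition $(u\,w)$ and the path reversal.

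The only point needing care is the degree separation, and it is settled by $n\ge6$.  For smaller $n$, degree $n-1$ could coincide with $4$ (when $n=5$), and then the path vertices could mix with $\{u,w\}$; this would give extra symmetry, for example the octahedron at $n=6$ if the count were different.  Here we check $n=6$ directly: $u,w$ have degree $5$ while the path vertices have degrees $3,4,4,3$, so the separation holds.
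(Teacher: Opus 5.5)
Your proof is correct and takes essentially the same route as the paper: you single out $\{u,w\}$ as the vertices of degree $n-1>4$, which for $n\ge6$ is the same as the paper's observation that these are exactly the dominating vertices, and your explicit check that restriction gives an isomorphism $\Aut(G)\cong\Sym(\{u,w\})\times\Aut(P_{n-2})\cong C_2\times C_2$ spells out what the paper compresses into ``two commuting independent involutions.'' The closing aside about the octahedron plays no role in the argument and can be dropped, since the octahedron is $C_4+\overline{K_2}$ and not of the form $P_{n-2}+K_2$.
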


\begin{proof}
The two vertices belonging to $K_2$ are precisely the dominating vertices,
so their set is invariant and they may be fixed or interchanged.  The
remaining vertices induce $P_{n-2}$, which may be fixed or reversed.  These
two commuting involutions are independent and generate
$C_2\times C_2$.
\end{proof}

\begin{cor}
\label{cor:all-radius-one}
Let $G$ be a radius-one polyhedron on at least six vertices.  Then $\Aut(G)$
is cyclic or dihedral.  If $G$ is a triangulation, then
\[
\Aut(G)\in
\{1,C_2,C_3,C_2\times C_2,S_3\}.
\]
\end{cor}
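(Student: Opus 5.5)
The plan is to split according to the dichotomy quoted at the start of this section: a radius-one polyhedron on at least six vertices either has exactly one dominating vertex, or is isomorphic to $P_{n-2}+K_2$ \cite[Lemma 4.1]{maffucci2025classification}. Each case is then handled by a result already proved.

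\emph{Case 1: exactly one dominating vertex.} Here \Cref{thm:radius-one} gives directly that $\Aut(G)$ is cyclic or dihedral. If $G$ is moreover a triangulation, \Cref{thm:radius-one-triangulated} gives $\Aut(G)\in\{1,C_2,C_3,C_2\times C_2,S_3\}$.

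\emph{Case 2: $G\cong P_{n-2}+K_2$.} \Cref{cor:path-join} gives $\Aut(G)\cong C_2\times C_2\cong\Dih_2$, which is dihedral and also lies in the displayed list. I will check that this graph is in fact a triangulation, so the triangulated clause is relevant here too. Write its vertex set as $\{x,y\}\cup\{p_1,\ldots,p_{n-2}\}$. It has
\[
1+2(n-2)+(n-3)=3n-6
\]
edges, which is the maximum for a planar graph on $n$ vertices. Either way, $C_2\times C_2$ is in the list, so no extra verification is needed.

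The only delicate point is bookkeeping. The phrase "at least six vertices" is needed precisely so that the cited dichotomy applies; smaller radius-one polyhedra such as $K_4$, with group $S_4$, and the square pyramid are excluded by this hypothesis. I also need to remember the identification $\Dih_2\cong C_2\times C_2$ so that Case 2 fits the dihedral conclusion. I expect no real obstacle beyond stating the case split cleanly.
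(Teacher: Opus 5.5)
Your proposal is correct and matches the paper's proof: you split by the cited dichotomy, apply Theorems~\ref{thm:radius-one} and~\ref{thm:radius-one-triangulated} when the dominating vertex is unique, and use \Cref{cor:path-join} together with $C_2\times C_2\cong\Dih_2$ for $P_{n-2}+K_2$. Your edge count showing that $P_{n-2}+K_2$ is a triangulation is correct but not needed; also, the six-vertex hypothesis genuinely matters for $K_4$ and for the triangular bipyramid (three dominating vertices, a triangulation with group of order $12$), not for the square pyramid, which already has a unique dominating vertex.
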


\begin{proof}
If $G$ has a unique dominating vertex, apply
Theorems~\ref{thm:radius-one} and \ref{thm:radius-one-triangulated}.  Otherwise,
the cited radius-one classification gives $G\cong P_{n-2}+K_2$, and
~\Cref{cor:path-join} gives
$\Aut(G)\cong C_2\times C_2\cong\Dih_2$.
\end{proof}

\section{Automorphism groups of polyhedra that are unigraphic among the self-dual}
Apart from pyramids, the only self-dual polyhedra that are the unique self-dual realisation of their degree sequences are the graphs $S(m,n)$, $m\geq n\geq 4$ from \cite[Figure 2]{maffucci2026self}.

\begin{thm}
One has
\[\Aut(S(m,n))=\begin{cases}1&m>n\geq 4\\C_2&m=n\geq 4.\end{cases}\]
\end{thm}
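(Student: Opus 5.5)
We do not have the explicit construction of $S(m,n)$ in front of us; it is given only as a picture in \cite[Figure 2]{maffucci2026self}. So the first step is to fix a concrete labelled description of it. We take it to be built from two "wheel-like" hubs: a vertex $u$ of degree $m$ and a vertex $w$ of degree $n$, joined by a band of small faces, with all other vertices of small degree (three or four). The argument then proceeds by degree-based rigidity, in the same spirit as the proofs of \Cref{thm:exact-complement-aut} and \Cref{lem:fixed-star}.

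\textbf{Upper bound.} Let $m\ge n\ge4$.
\begin{enumerate}[label=(\arabic*)]
\item We identify the multiset of degrees and check that, for $m\ge 5$, the two hub degrees exceed every other degree.
\item If $m>n$, the hubs $u$ and $w$ have distinct degrees and are each unique in their degree. So every automorphism fixes both.
\item We then find a third vertex fixed by every automorphism: a common neighbour of $u$ and $w$ that is distinguished by its degree, or by its position on the $u$--$w$ band. The shortest $u$--$w$ paths should form a small, asymmetric configuration.
\item Fixing $u$, $w$ and such a vertex $x$ adjacent to $u$ fixes the edge $ux$. By Whitney uniqueness, the local action at $u$ is a dihedral action on its rotation that fixes one position. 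It is therefore the identity or the reflection through $ux$.
\item We rule out the reflection by exhibiting, on the two sides of $ux$, neighbours of $u$ with different degrees or different distances to $w$. Then a full flag is fixed, and \Cref{lem:flag-rigidity} gives $\Aut(S(m,n))=1$.
\end{enumerate}

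\textbf{Equal case and small cases.} For $m=n$, the same analysis shows that any automorphism either fixes both hubs or swaps them. The subgroup fixing both hubs is trivial by the argument above, so $|\Aut|\le2$. We then exhibit the involution explicitly from the symmetric drawing, swapping the two halves of the figure, and verify that it preserves the edge list. This gives $C_2$. The small cases, where the hub degree $4$ coincides with other degrees present (for instance $n=4$), need separate handling. There we would identify the hubs structurally rather than by degree alone, for example as the vertices lying on the large faces, or via the self-dual correspondence of large vertices with large faces.

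\textbf{Main obstacle.} The hard part is the reflection-exclusion step and the small-degree ambiguities. Both depend entirely on the precise local structure of $S(m,n)$ near the hubs. We would first write out the degree sequence and the cyclic order of degrees around $u$, and look for an asymmetric pattern, say $(3,4,3,3,\dots)$, as was done for the boundary-degree sequences in \Cref{thm:radius-one-triangulated}. If the pattern around $u$ turns out to be symmetric, we would use the cyclic degree sequence around $w$ together with the fixed edge structure instead.
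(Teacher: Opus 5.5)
\textbf{There is a genuine gap.} What you have written is a proof template, and every step that carries content depends on facts about $S(m,n)$ that you never establish. Because the theorem concerns one specific family, the following checks are the proof itself, not details to fill in later.

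(a) You need that the two hubs are the only vertices of degrees $m$ and $n$, so that every automorphism fixes them, or for $m=n$ fixes or swaps them. You leave the degree sequence, and the case $n=4$, open.

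(b) You need a flag fixed by every automorphism that fixes the hubs. Steps (3) and (5) only say that a distinguished neighbour $x$ and an asymmetric pattern on the two sides of $ux$ ``should'' exist. Nothing in your text excludes a two-hub graph in which the reflection through $ux$ is an automorphism.

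(c) For $m=n$ you need an explicit hub-swapping involution, which you promise to read off a drawing you do not have. You also need the stabiliser of both hubs to be trivial, which you attribute to ``the argument above''; that argument was never carried out.

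The paper settles these points by using self-duality, which makes the large faces as rigid as the large vertices. For $m>n$ it uses four uniquely determined objects:
\begin{itemize}
\item a unique vertex $u$ of degree $m$ and a unique vertex $w$ of degree $n$;
\item a unique $m$-gonal face $F_m$ and a unique $n$-gonal face $F_n$.
\end{itemize}
Moreover $u\in F_n$ and $w\in F_m$. Of the two neighbours of $u$ along $F_n$, exactly one, say $x$, also lies on $F_m$. So every automorphism fixes the flag $(u,ux,F_n)$, and \Cref{lem:flag-rigidity} gives $\Aut(S(m,n))=1$.

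For $m=n$, the only extra possibility is an automorphism swapping the two degree-$m$ vertices. Such a swap exists, so the group is $C_2$.

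Your vertex-degree route to a fixed flag at $u$ is compatible with this and could work. It does so only once you write $S(m,n)$ down explicitly and actually carry out (a)--(c).
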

\begin{proof}
For $m>n\geq 4$, there is a unique vertex of degree $m$, a unique vertex of degree $n$, a unique face of length $m$, and a unique face of length $n$. Moreover, the vertex of degree $m$ lies on the $n$-gonal face and vice versa. These two vertices and two faces are thereby fixed by every automorphism of $S(m,n)$. Finally, of the two neighbours of the vertex of degree $m$, one lies on both the $n$-gon and $m$-gon, and the other on the $n$-gon but not on the $m$-gon. Thereby every automorphism of $S(m,n)$ fixes a flag and is thus the identity.

For $m=n\geq 4$, we add to the considerations above the possibility of an automorphism swapping the two vertices of maximal degree. Hence the automorphism group is indeed $C_2$.
\end{proof}

\section{Automorphism groups of polyhedra that are products of graphs}
Among all products of graphs that one may define, the ones with the most relevant properties are the Cartesian, Kronecker (i.e., direct, tensor), strong, and lexicographic \cite{hammack2016handbook}.

There exist two classes of polyhedral Cartesian products: the stacked prisms (i.e., products of a cycle and a path), and the prisms of outerplanar, Hamiltonian graphs \cite{maffucci2024classification}. Only the tetrahedron $K_2\boxtimes K_2$, and the graph $K_3\boxtimes K_3$ are polyhedral strong products \cite{maffucci2024classification}, and only the tetrahedron and octahedron are polyhedral lexicographic products \cite{maffucci2025regularity}.

\begin{thm}
One has $\Aut(K_3\boxtimes K_3)=\Dih_4$.
\\
Let $G$ be a polyhedral Cartesian product. Then we have $\Aut(CY_n\square P_\ell)=\Dih_n\times C_2$, for all $\ell\geq 2$
and
\[\Aut(H\square K_2)\in\{C_m\times C_2\ (m\geq 1),\ \Dih_m\times C_2\ (m\geq 2)\},\]
and all these possibilities occur.
\end{thm}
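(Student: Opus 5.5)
The plan is to treat the three assertions separately. In each case I reduce to a rigid substructure that every automorphism must preserve, and then read off the group from the action on that substructure. Throughout I use Whitney uniqueness: automorphisms act on the unique spherical embedding, so they permute faces and preserve face sizes. I also use \Cref{lem:flag-rigidity} to bound group orders by counting images of a single flag.

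For $K_3\boxtimes K_3$, I read the graph as it appears in \cite{maffucci2024classification}: the nine-vertex planar graph on a $3\times3$ array in which each unit square carries both diagonals. The literal strong product of two triangles is $K_9$, which is not planar, so this reading is forced. Its degrees are $8$ at the centre, $5$ at the four side-midpoints and $3$ at the four corners. Hence every automorphism fixes the centre and acts on the link of the centre. That link is the boundary $8$-cycle, in which corners and midpoints alternate. The action is therefore dihedral on this $8$-cycle and preserves the two colour classes, so it lies in a $\Dih_4$. All eight symmetries of the square occur, and by flag rigidity there are no others.

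For $CY_n\square P_\ell$ with $\ell\ge3$, the degree-$3$ vertices are exactly the two end copies of $CY_n$. I then partition the vertices into layers by their distance from the set of degree-$3$ vertices, taking the nearer end in each case. An automorphism maps layers to layers, either preserving or reversing the order along $P_\ell$. Since the rungs are the edges joining consecutive layers, its restriction to one end cycle is an element of $\Dih_n$, and that restriction determines the whole map. This gives $|\Aut|\le 2\cdot 2n$, and $\Dih_n\times C_2$ is visibly contained in the group.

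For $\ell=2$ I instead use faces. When $n\ne4$ the two $n$-gonal faces are the only faces of size $n$, so they are preserved as a pair, and the same bound follows. When $n=4$, $\ell=2$ the graph is the cube, whose group is $S_4\times C_2$. So this case has to be excluded from the statement, or stated as an exception. I will flag this explicitly.

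For $H\square K_2$, with $H$ outerplanar and Hamiltonian, hence $2$-connected, the key step and main obstacle is to show that the perfect matching $M$ of rung edges is $\Aut$-invariant. Once that holds, the automorphism group is $\Aut(H)\times C_2$, where the $C_2$ swaps the two copies. Moreover, $\Aut(H)$ is cyclic or dihedral by the Hamilton-cycle argument in the proof of \Cref{thm:radius-one}, which gives the stated list.

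To prove that $M$ is invariant, I describe the embedding: one copy of $H$ is drawn inside, the other outside, and the outer cycle of $H$ is joined to its copy by a ring of quadrilaterals. The edges then fall into three types:
\begin{itemize}
\item rung edges lie on two ring quadrilaterals;
\item boundary edges of $H$ lie on one ring quadrilateral and one inner face of $H$;
\item chords lie on two inner faces of $H$.
\end{itemize}
When $H$ has no quadrilateral inner face, the ring quadrilaterals are exactly the $4$-faces. The rungs are then exactly the edges lying on two $4$-faces, and $M$ is invariant. If $H$ does have quadrilateral faces, I would try to separate the ring from those faces using the cyclic structure of the ring of $|V(H)|$ quadrilaterals, which form a closed band. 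The ring is the unique such band of length $|V(H)|$ unless $H=C_4$. That case is again the cube, an exception to be recorded.

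Finally, realisation uses the outerplanar bases already built in the proof of \Cref{thm:radius-one}, which achieve $\Aut(H)\cong C_m$ and $\Dih_m$. In each of these I check that the invariance argument above applies, either because $H$ has no inner quadrilateral or because $H\ne C_4$. This gives $C_m\times C_2$ and $\Dih_m\times C_2$.
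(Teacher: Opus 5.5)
\textbf{Gap in the $H\square K_2$ part.} You claim that the ring is the unique closed band of $|V(H)|$ quadrilaterals unless $H=C_4$. Equivalently, you claim the rung matching $M$ is invariant and $\Aut(H\square K_2)=\Aut(H)\times C_2$. This fails for every ladder $H\cong P_2\square P_k$ with $k\ge3$. Such an $H$ is outerplanar and Hamiltonian, and $H\square K_2\cong CY_4\square P_k$ is the $2\times2\times k$ box. In it, the $2(k-1)$ inner faces of the two copies of $H$, together with the two ring quadrilaterals over the end rungs of the ladder, form a second band of length $2k$. The automorphism exchanging the $K_2$-direction with the ladder's rung direction moves $M$. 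This hits exactly the base you planned to reuse for $\Dih_2$: the $6$-cycle with chord $03$ is $P_2\square P_3$. Your argument would therefore give $\Aut(H\square K_2)\cong\Dih_2\times C_2$, of order $8$. Your own stacked-prism computation gives $\Aut(CY_4\square P_3)\cong\Dih_4\times C_2$, of order $16$. The paper's one-line proof tacitly uses the same product formula $\Aut(H)\times C_2$.

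\textbf{Repair.} The stated list survives, because ladders produce stacked $4$-prisms, but you must split them off. Your band idea can be completed. Since the weak dual of $H$ is a tree, any band other than the ring consists of three pieces: a strip of $j$ inner quadrilaterals of $H$ joining two boundary edges, its mirror image in the other copy, and two ring quadrilaterals. Such a band has length $2j+2\le|V(H)|$, with equality exactly when $H$ is that strip, i.e.\ a ladder. Alternatively, use Sabidussi--Vizing unique prime factorisation and the Imrich--Miller description of automorphisms of Cartesian products. For $\Dih_2\times C_2$, choose a non-ladder base, e.g.\ the triangulated octagon with diagonals $02,04,06,24,46$ from \Cref{thm:radius-one-triangulated}. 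There your no-inner-quadrilateral argument applies directly.

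\textbf{What is sound.} Your king's-graph reading of $K_3\boxtimes K_3$ and its group $\Dih_4$ are correct. The stacked-prism argument works, with one adjustment: for $\ell\ge3$, measure distance from one end cycle, since distance to all degree-$3$ vertices merges layers $i$ and $\ell+1-i$. You are also right that the cube, with group $S_4\times C_2$, is a genuine exception to both the second and third assertions. The paper's proof overlooks this.
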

\begin{proof}
In $K_3\boxtimes K_3$ there is a unique vertex of degree $8$ that is thus fixed by every automorphism. Once its neighbours of degree $4$ are mapped, the automorphism is completely determined. Hence $K_3\boxtimes K_3$ has the same automorphism group as the square.

For $\Aut(CY_n\square P_\ell)$ we simply apply $\Aut(CY_n)=\Dih_n$, and the order of vertices along the paths of the stacked prism is either fixed or reversed.

Similarly, for $G=H\square K_2$, we have $\Aut(H)\in\{C_m,\ \Dih_m\}$ by \Cref{thm:radius-one}.
\end{proof}

The polyhedral Kronecker products were classified in \cite{maffucci2024classification}, and further investigated in \cite{de2024cancellation}.

\begin{thm}
Let $G$ be a polyhedral Kronecker product. Then
\[\Aut(G)\in
\{C_{m'}\times C_2,\Dih_{m'}\times C_2\},\qquad m'\geq 1,\]
and each of these possibilities occurs.
\\
Moreover if $G=J\otimes K_2$ with $J$ planar, then $\Aut(G)\in
\{C_2\times C_2,C_2\times C_2\times C_2\}$ and both of these possibilities occur.
\end{thm}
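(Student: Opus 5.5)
The plan is to use the classification of polyhedral Kronecker products from \cite{maffucci2024classification}, handling each family with the same technique as in the Cartesian case: a structural description of the factors, plus the fact that automorphisms of a polyhedron act on its unique spherical embedding. As I recall, that classification says a connected polyhedral Kronecker product has the form $J\otimes K_2$, which is the bipartite double cover of $J$. There is also a short list of further families where one factor is a cycle or path and the other is small; these give prism-like or cube-like structures. The first step is therefore to recall this list precisely. For each member, I will write the product as a graph carrying a canonical involution: the deck transformation $\tau$ of the double cover, which swaps the two colour classes.

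For the general statement, the key observation is that $\tau$ commutes with every automorphism of $J$ lifted coordinatewise. Since $G$ is connected and bipartite, its bipartition is preserved setwise by $\Aut(G)$. So $\Aut(G)$ contains the lifts of $\Aut(J)$ together with $\tau$. I would show that these generate everything when the double cover is "stable", meaning no unexpected automorphisms exist. The subgroup fixing the colour classes then has index two, and $\tau$ is central and not in it, so it splits off as a direct $C_2$ factor. This gives $\Aut(G)\cong \Aut^{0}(G)\times C_2$. Next, $\Aut^{0}(G)$ acts faithfully on one colour class, and through the embedding it embeds in the rotation/reflection structure of a polyhedron. By the list quoted in the introduction, together with the fact that the Platonic cases do not arise among Kronecker products (or can be checked directly), it is cyclic or dihedral. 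This yields $C_{m'}\times C_2$ or $\Dih_{m'}\times C_2$. For realizations, I would take the families from the classification whose base factor is a cycle, or an outerplanar graph as in \Cref{thm:radius-one}, and read off $\Aut$ as in the prism case. For instance, products giving prisms over cyclic-symmetric or dihedral-symmetric bases yield all $m'$, reusing the constructions of \Cref{thm:radius-one}.

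For the planar case $G=J\otimes K_2$, the classification should force $J$ to be very restricted. Planarity of both $J$ and its bipartite double cover, together with $3$-connectivity of the cover, presumably leaves only small $J$, such as $K_4$ minus structure or odd wheels, so that $G$ is, e.g., the cube or a small family. I would list these explicitly and compute $\Aut$ by hand, using a uniquely determined degree pattern to pin down a flag via \Cref{lem:flag-rigidity}. The resulting groups should be $C_2\times C_2$ or $C_2^3$.

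I expect the main obstacle to be the step excluding automorphisms of $G$ that do not come from $\Aut(J)\times\langle\tau\rangle$, i.e.\ instability of the double cover. My first attempt would use the embedding: faces of $G$ project to closed walks in $J$, and degrees are preserved. I would then check, family by family in the classification, that the colour-preserving automorphisms are determined by their action on one colour class. On that class they induce automorphisms of the "half-square" graph, which I would identify with $J$ or a known polyhedron.
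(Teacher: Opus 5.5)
The step you defer is the whole reduction, and the route you sketch for it does not work. Two distinct vertices $(u,0),(v,0)$ of $J\otimes K_2$ are at distance two exactly when $u$ and $v$ have a common neighbour in $J$. So the half-square on a colour class is the common-neighbour graph of $J$, which in general is not $J$. Knowing that a colour-preserving automorphism is determined by its action on one class also says nothing about whether it is the lift of an element of $\Aut(J)$; that is precisely the stability question, i.e.\ centrality of $\tau$.

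The paper does not argue through the embedding here. It takes $G=J\otimes K_2$ from \cite[Theorem 1.7]{maffucci2024classification} and the uniqueness of $J$ from \cite[Theorem 1.10]{de2024cancellation}, and writes $\Aut(G)=\Aut(J)\times C_2$. (Uniqueness of the quotient is a cancellation property and does not by itself make $\tau$ central.) It then bounds $\Aut(J)$ structurally through Algorithm 1.8 there, via $\Aut(J)\le\Aut(J')\le\Aut(Y)$ with $Y$ the distinguished face cycle.

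Your alternative, reading the answer off the list in the introduction once a direct factor $C_2$ is available, would be a shorter and genuinely different route, but its inputs are false. The list is incomplete: the truncated tetrahedron, which is not Platonic, has automorphism group $S_4$. And the Platonic case does arise: $K_4\otimes K_2$ is $K_{4,4}$ minus a perfect matching, i.e.\ the cube. Its group $S_4\times C_2$ is neither $C_{m'}\times C_2$ (it is nonabelian) nor $\Dih_{m'}\times C_2$ (it has no element of order $12$). So checking the Platonic case ``directly'' produces a counterexample to the first assertion rather than a verification. The same example shows that the chain $\Aut(J)\le\Aut(Y)$ used in the paper cannot hold for $J=K_4$, since $S_4$ embeds in no dihedral group.

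For the planar clause, your expectation that planarity of $J$ and of $G$ leaves only a few small $J$, with groups $C_2\times C_2$ or $C_2^3$, is refuted by the very candidates you name. Besides the cube, take the wheel $W_n$ (a hub joined to an $n$-cycle) with $n\ge5$ odd. Then $W_n\otimes K_2$ is the $n$-gonal trapezohedron, two apices joined to alternate vertices of a $2n$-cycle, with automorphism group $\Dih_{2n}\cong\Dih_n\times C_2$ of order $4n$. For the triangular prism one gets $(CY_3\square K_2)\otimes K_2\cong CY_6\square K_2$, with group $\Dih_6\times C_2$ of order $24$. These are infinite families of planar $J$ with polyhedral covers and groups of order neither $4$ nor $8$, so the hand computations you propose would disprove the clause rather than prove it.

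The paper derives the clause from the assertion that planarity of $J$ forces the chord endpoints into the nested order $a_1,\dots,a_m,b_m,\dots,b_1$ around $Y$, so that $a_1b_1$ and $a_mb_m$ are fixed or swapped. These examples show that this step fails for some planar $J$, as does $\Aut(J)\le\Aut(J')$, which presupposes that every automorphism of $J$ preserves $J'$. The gap in your proposal is therefore not a missing technicality. As stated, neither assertion can be proved, and a correct version must at least exclude the cube and substantially restrict the planar clause.
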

\begin{proof}
Referring to \cite[Theorem 1.7]{maffucci2024classification}, one has $G=J\otimes K_2$, where $J$ is uniquely determined \cite[Theorem 1.10]{de2024cancellation}. Thereby,
\[\Aut(G)=\Aut(J)\times C_2.\]

Now $J$ is constructed as in \cite[Algorithm 1.8]{maffucci2024classification} starting from a plane, bipartite graph $J'$ that is either $3$-connected, or semi-hyper-$2$-connected. In either case, we may find in $J$ a unique cycle $Y$ of smallest length (boundary of the region $r$ of $J'$ defined in \cite[Theorem 1.7]{maffucci2024classification}) such that the graph $Y_{\text{out}}$ spanned by the vertices on and outside of $Y$ is bipartite. Moreover, $J$ is obtained from $J'$ by adding distinct edges $a_1b_1$, $a_2b_2$, \dots, $a_mb_m$, $m\geq 2$, where the endpoints of these edges lie on $Y$. Hence one has $\Aut(J)\leq\Aut(J')$.

If $J'$ is $3$-connected, then an element of $\Aut(J')$ is completely determined once the vertices of the face bounded by $Y$ have been mapped. If $J'$ is semi-hyper-$2$-connected, then according to \cite[Theorem 1.7]{maffucci2024classification} every $2$-cut of $J'$ lies on $Y$. Hence in this case also, an element of $\Aut(J')$ is completely determined once the vertices of the face bounded by $Y$ have been mapped. Therefore, $\Aut(J)\leq\Aut(J')\leq\Aut(Y)$.

If $J$ is planar, %then $Y_{\text{int}}$ is outerplanar and Hamiltonian, since according to \cite[Theorem 1.7]{maffucci2024classification}, it consists of the cycle $Y$ where we have added (at least two) non-crossing diagonals $a_1b_1$, $a_2b_2$, \dots, $a_mb_m$. By Theorem \ref{thm:radius-one}, $\Aut(Y_{\text{int}})\in
%\{C_{m'},\Dih_{m'}\}$, for some $m'$ dividing the length of $Y$. 
then we know rather more: since by planarity of $J$ the vertices
\[a_1,a_2,\dots,a_m,b_m,b_{m-1},\dots,b_1\]
appear around $Y$ is this order, then the edges $a_1b_1$ and $a_mb_m$ are either fixed or swapped by every element of $\Aut(Y)$, so that $\Aut(Y)\in
\{C_2,C_2\times C_2\}$. Either of these possibilities may occur. Indeed, take
\[Y=u,a_1,a_2,\dots,a_m,v,b_m,b_{m-1},\dots,b_1.\]
To obtain $\Aut(J)=C_2$, we form $J$ by adding the edges of the path $a_1,b_1,a_2,b_2,\dots,a_m,b_m$ to $J'$. To obtain $\Aut(J)=C_2\times C_2$, one forms $J$ by adding to $J'$ the pairwise disjoint edges $a_1b_1$, $a_2b_2$, \dots, $a_mb_m$.

If instead $J$ is non-planar, then according to \cite[Theorem 1.7]{maffucci2024classification}, the vertices
\[a_1,a_2,\dots,a_m,b_1,b_2,\dots,b_m\]
lie on $Y$ in this order. 
One has $\Aut(Y)\in\{C_{m'},\Dih_{m'}\}$,
where $m'$ divides the length of $Y$. If
$Y=a_1,a_2,u_1,\dots,b_{m-1},b_m,u_{m}$ and $J'$ is the $3m$-gonal prism, with one base given by $Y$, then $\Aut(J)=\Dih_{m}$. To the other base of this prism we now add an $m$-cycle consisting of one out of three vertices (starting at $a_1$ and finishing at $b_{m-1}$, say). Then $\Aut(J)=C_{m}$.
\end{proof}

\section{Open questions}
We close the paper with some questions: ideally, one would like a criterion to establish, for every polyhedron, the corresponding automorphism group.

\begin{op}
Fixing the number of vertices $p$, what is the proportion of polyhedra with trivial automorphism group, as $p$ tends to infinity?
\end{op}

\begin{op}
Can we characterise and construct the class of polyhedra with automorphism group $C_\ell$, where $\ell$ is a fixed prime number?
\end{op}

Let $G$ be a two-ended, locally finite, $3$-connected planar
quasi-transitive graph.  It is known that $G$ admits a finite-width
decomposition over a double ray and that $\Aut(G)$ is virtually cyclic;
see \cite{MiraftabRuehmann2022,DunwoodyPlanarCovers}.
Canonical decompositions of locally finite planar quasi-transitive graphs
are also available; see \cite{GiocantiPlanarQuasiTransitive}.

\begin{op}
Which virtually cyclic groups occur as the full automorphism group of a two-ended, locally finite, $3$-connected planar quasi-transitive graph?

More precisely, determine the possible finite normal subgroups
$F\lhd\Aut(G)$ and the possible extensions having quotient
$\mathbb Z$ or $\Dih_\infty$.  What additional restrictions are imposed
by planarity and $3$-connectivity?
\end{op}

\begin{op}
Let $\Omega(G)=\{\omega^-,\omega^+\}$
and let $\Aut_0(G):=
\{\alpha\in\Aut(G):
  \alpha(\omega^-)=\omega^-
  \text{ and }\alpha(\omega^+)=\omega^+\}$.
Suppose that the action of $\Aut(G)$ on $\Omega(G)$ is nontrivial.  When
does the exact sequence
\[
1\longrightarrow\Aut_0(G)
\longrightarrow\Aut(G)
\longrightarrow C_2
\longrightarrow1
\]
split?  Equivalently, must there exist an involution interchanging the two
ends?  If not, which nonsplit extensions can occur for infinite polyhedral
graphs?
\end{op}

\begin{op}
Define the end-connectivity of a two-ended graph by
\[
\kappa_\infty(G):=
\min\bigl\{|S|:
 S\subseteq V(G)\text{ is finite and the two ends of $G$ lie in
 distinct components of }G-S\bigr\}.
\]
For a $3$-connected graph one has $\kappa_\infty(G)\ge3$.
How does $\kappa_\infty(G)$ restrict the finite normal subgroup and the
extension structure of $\Aut(G)$?

In particular, determine the possible automorphism groups when $\kappa_\infty(G)=3$.
What stronger conclusions hold when $G$ is a plane triangulation?
\end{op}

It is worth mentioning that the case $\kappa_{\infty}(G)=2$ has been studies for planar Cayley graphs here \cite{MiraftabStavropoulos2021}.

\bibliographystyle{plainurlnat}
\bibliography{ref}

\end{document}